\documentclass[11pt]{article}
\usepackage{amssymb,amsmath}
\usepackage{cases}
\usepackage{amsthm}
\usepackage{amsfonts}
\usepackage{makecell}
\usepackage{cite,color,xcolor}
\usepackage[margin=1.4 in]{geometry}
\usepackage[colorlinks,citecolor=blue,urlcolor=blue]{hyperref}
\usepackage{tikz}

\usetikzlibrary{patterns}
\usepackage[utf8]{inputenc}
\usepackage[pagewise]{lineno}
\usepackage{pgfplots}

\newtheorem{theorem}{Theorem}[section]
\newtheorem{corollary}{Corollary}[section]
\newtheorem{lemma}{Lemma}[section]

\newtheorem{remark}{Remark}[section]

\usepackage{txfonts}
\newcommand{\bal}{\begin{align}}
\newcommand{\bbal}{\begin{align*}}
\newcommand{\beq}{\begin{equation}}
\newcommand{\eeq}{\end{equation}}
\newcommand{\bca}{\begin{cases}}
\newcommand{\eca}{\end{cases}}

\newcommand{\pa}{\partial}
\newcommand{\fr}{\frac}
\newcommand{\na}{\nabla}

\newcommand{\ep}{\varepsilon}
\newcommand{\dd}{\mathrm{d}}

\newcommand{\R}{\mathbb{R}}

\newcommand{\les}{\lesssim}

\newcommand{\f}{\left}
\newcommand{\g}{\right}

\numberwithin{equation}{section}

\begin{document}
\title{Norm inflation and low-regularity ill-posedness for the rod equation}

\author{
 Jinlu Li
 \footnote{
School of Mathematics and Computer Sciences, Gannan Normal University, Ganzhou 341000, China.
\text{E-mail: lijinlu@gnnu.edu.cn}}
\quad and\quad
Yanghai Yu
\footnote{
School of Mathematics and Statistics, Anhui Normal University, Wuhu 241002, China.
\text{E-mail: yuyanghai214@sina.com} (Corresponding author)}
}
\date{\today}
\maketitle
\begin{abstract}
In this paper, we consider the Cauchy problem for the rod equation in the line.  By constructing an explicit smooth initial data, we present a new method to prove that this problem is ill-posed in $H^s(\R)$ with $1< s<3/2$ in the sense of {\it norm inflation}, i.e., an initial data is smooth and arbitrarily small in $H^s(\R)$ with $1< s<3/2$, but the solution becomes arbitrarily large in the Sobolev space after an arbitrarily short time.
\end{abstract}

{\bf Keywords:} Rod equation, Norm inflation, Ill-posedness

{\bf MSC (2010):} 35G25, 35Q53, 37K10.

\section{Introduction}

\bibliographystyle{plain}
\subsection{Models and Background}\label{subsec1}
We now discuss some background on the compressible hyper-elastic rod equation and review some of the history and previous work on this problem. Although a rod is always three-dimensional, if its diameter is much less than the axial length scale, one-dimensional equations can give a good description of the motion of the rod.  The propagation of nonlinear waves inside cylindrical hyper-elastic rods, assuming that the diameter is small when compared to the axial length scale, is described by the one dimensional equation which was derived by Dai \cite{dai1} as a new (one-dimensional) nonlinear dispersive equation including extra nonlinear terms involving second-order and third-order derivatives for a compressible hyper-elastic material
\begin{align}\label{or}
v_\tau+\sigma_1 v v_{\xi}+\sigma_2 v_{\xi \xi \tau}+\sigma_3\left(2 v_{\xi} v_{\xi \xi}+v v_{\xi \xi \xi}\right)=0, \quad \xi \in \mathbb{R},\; \tau>0,
\end{align}
where $v(\tau, \xi)$ represents the radial stretch relative to a pre-stressed state, $\sigma_1 \neq 0, \sigma_2<0$ and $\sigma_3 \leq 0$ are physical constants depending by the material. Taking the scaling transformations
$$
\tau=\frac{3 \sqrt{-\sigma_2}}{\sigma_1} t, \quad
\xi=\sqrt{-\sigma_2} x,
$$
and denoting $\gamma=3 \sigma_3 /\left(\sigma_1 \sigma_2\right)$, then relabeling as $u(t, x)=v(\tau, \xi)$, we reduce the above equation \eqref{or} to
\begin{align}\label{or1}
u_t-u_{x x t}+3 u u_x=\gamma\left(2 u_x u_{x x}+u u_{x x x}\right), \quad x \in \mathbb{R},\; t>0 .
\end{align}
In \cite{dai2}, the authors derived that value range of $\gamma$ is from -29.4760 to 3.4174 for some special compressible materials. From the mathematical
view point, we regard $\gamma$ as a real number. The real parameter $\gamma$ is related to the Finger deformation tensor of the material.
Both positive and negative values $\gamma$ are admissible. We refer to \cite{dai1,dai2} for more details on the physical background and the mathematical derivation of the model.
For $\gamma=0$, the rod equation \eqref{or1} reduces to the well-known BBM equation \cite{bbm}, modeling surface waves in a canal. In this case the solutions exist globally, meaning that the BBM equation can model permanent waves, but is unsuitable for describing breaking waves.
For $\gamma=1$, the rod equation \eqref{or1} becomes the Camassa-Holm equation, modeling long waves in shallow water. Such equation marked an important development in nonlinear dynamics and is of great current interest. The Camassa-Holm equation is thus much better understood than Eq.\eqref{or1}. In fact, the Camassa-Holm equation is completely integrable \cite{Camassa,Constantin-P} with a bi-Hamiltonian structure \cite{Constantin-E,Fokas} and infinitely many conservation laws \cite{Camassa,Fokas}. Also, it admits exact peaked soliton solutions (peakons) of the form $u(x,t)=ce^{-|x-ct|}\;(c>0)$, which are orbitally stable \cite{Constantin.Strauss}. Another remarkable feature of the Camassa-Holm equation is the wave breaking phenomena: the solution remains bounded while its slope becomes unbounded in finite time \cite{Constantin}.

We complement the equation \eqref{or1} with initial data
\begin{align}\label{or2}
u(0,x)=u_0(x),
\end{align}
 and vanishing boundary conditions
 \begin{align}\label{or3}
 u(x)\to0,\quad |x|\to\infty.
 \end{align}
Such boundary conditions will be taken into account through an appropriate choice of the functional setting, guaranteeing the well-posedness of the Cauchy problem.

We denote by $G(x)=\fr12 e^{-|x|}$
the fundamental solution of the operator $1-\partial_x^2$, i.e., $(1-\partial_x^2)^{-1} f=G * f$. Let $m=u-u_{x x}$ be the potential of $u$. We thus have $u=G*m$, and $m$ satisfies
\bbal
\pa_tm+\gamma um_x+2\gamma m\pa_xu+3(1-\gamma)u\pa_xu=0.
\end{align*}
It is also convenient to rewrite the Cauchy problem associated with the equations \eqref{or1}-\eqref{or3} in the following weak form:
\begin{align}\label{r}
\begin{cases}
\pa_tu+\gamma u\pa_xu=-\pa_xG*\f(\frac{3-\gamma}{2}u^2+\frac\gamma2(\pa_xu)^2\g), &\quad (t,x)\in \R^+\times\R,\\
u(0,x)=u_0(x), &\quad x\in \R.
\end{cases}
\end{align}

\subsection{Known Well-posedness results}\label{subsec2}

For general $\gamma\in\R$, the rod equation was studied sketchily by the Constantin and Strauss in \cite{const}. The well-posedness, global solutions and blow-up phenomena of the Cauchy problem for \eqref{r} has been investigated by Yin \cite{yin} (see also \cite{zhou1,zhou2}). For any $\gamma \in \mathbb{R}$, by applying Kato's theory, the Cauchy problem for the rod equation is locally well-posed in $H^s$ with $s>3 / 2$. More precisely, if $u_0 \in H^s(\mathbb{R})$ with $s>3/2$, then there exists a maximal time $0<T^* \leq \infty$ and a unique solution $u \in C\left(\left[0, T^*\right), H^s\right) \cap C^1(\left[0, T^*\right), H^{s-1})$. Moreover, the solution $u$ depends continuously on the initial data.
It is also known that $u$ admits the invariants
$$
E(u)=\int_{\mathbb{R}}\left(u^2+u_x^2\right)\dd x\quad\text{and}\quad
F(u)=\int_{\mathbb{R}}\left(u^3+\gamma u u_x^2\right) \dd x.
$$
In particular, the invariance of the $H^1$-norm of the solution implies that $u(t,x)$ remains uniformly bounded up to the time $T^*$. On the other hand, if $T^*<\infty$ then $\limsup_{t \rightarrow T}\|u(t)\|_{H^s}=\infty(s>3/2)$ and the precise blowup scenario of strong solutions is the following:
$$
T^*<\infty \Longleftrightarrow \liminf_{t \rightarrow T^*}\left\{\inf_{x \in \mathbb{R}} \gamma u_x(t, x)\right\}=-\infty.
$$
In order to obtain simpler sufficient conditions to ensure the occurrence of wave breaking for \eqref{r}, Brandolese \cite{B-cmp} established a new wave-breaking criterion, i.e, local-in-space blowup criterion which only
involves the values of $u'_0(x_0)$ and $u_0(x_0)$ in a single point $x_0$ of the real line, which have been improved in \cite{B-jfa}.

\subsection{Motivations}\label{subsec3}
In this paper, we focus on the ill-posedness of the strong solution for the rod equation in the low-regularity Sobolev spaces $H^s$ with $s<3/2$. For the critical case $s=3/2$, Guo-Liu-Molinet-Yin \cite{Guo-Yin} showed that the Camassa-Holm equation is ill-posed in $H^{3/2}$ by proving the norm inflation. When considering the ill-posedness for the rod equation in critial Sobolev spaces $H^{3/2}$, we just need one point large enough. In fact, following Guo-Liu-Molinet-Yin's method, we have
\bbal
\frac{\dd}{\dd t}\|u(t,x)\|^2_{H^2}&\leq C\|\na u(t,x)\|_{B^0_{\infty,\infty}}\|u(t,x)\|^2_{H^2}\log_2\f(1+\|u(t,x)\|^2_{H^2}\g)
\\&\leq C\|u(t,x)\|_{H^\frac32}\|u(t,x)\|^2_{H^2}\log_2\f(1+\|u(t,x)\|^2_{H^2}\g),
\end{align*}
which implies
\bbal
\|\pa_xu(t,x)\|_{L^\infty}\les \|u(t,x)\|_{H^2}\leq \|u_0\|_{H^2}\exp\f\{\exp\f\{C\int^t_0\|u(\tau,x)\|_{H^\frac32}\dd \tau\g\}\g\}.
\end{align*}
Hence, once it hods that $|\pa_xu(t,x_0)|\rightarrow \infty$ as $t$ tends to $T^*$ for some $x_0\in\R$, we must have $\|u(t)\|_{H^{3/2}}\rightarrow \infty$ as $t$ tends to $T^*$. However, to study the ill-posedness in the low-regularity Sobolev spaces $H^s$ with $s<\frac32$, we not only need $\pa_xu(t,x_0)$ to be large enough for some $x_0$, but also may need $\pa_xu(t,x)$ to be large uniformly in the interval $x\in(x_0-\delta,x_0+\delta)$ for some $\delta>0$. For example, for the Camassa-Holm equation, we have
\bbal
\|u\|^2_{L^2}+\|\pa_xu\|^2_{L^2}=\|u_0\|^2_{L^2}+\|\pa_xu_0\|^2_{L^2}.
\end{align*}
This show that, although $\pa_xu(t,x_0)$ is large enough for some $x_0\in \R$, we obtain that $\|\pa_xu(t,x)\|_{L^2}$ is not large. Hence, the ill-posed mechanism in low-regularity Sobolev spaces is completely different from the critical case. Moreover, it is interesting that, how we can find the critical index $s_0$ such that the strong solution $u(t,x)$ satisfy $\|u(t)\|_{H^s}$ will large if $s> s_0$, and $\|u(t)\|_{H^s}$ is small if $s< s_0$.

On the other hand, based on the construction of special symmetric 2-peakon solutions, called peakon-antipeakons, of the form
$$
u(x, t)=p(t) e^{-|x+q(t)|}-p(t) e^{-|x-q(t)|},
$$
many authors used the peakon-type traveling wave solutions to prove the ill-posedness of the Camassa-Holm type equation in the low-regularity Sobolev spaces $H^s(\R)$ with $s<3/2$.
Byers \cite{Byers} proved that the Camassa-Holm equation is ill-posed in $H^s$ for $1<s < 3/2$ in the sense of norm inflation, where norm inflation for the Camassa-Holm equation in $H^1(\R)$ is impossible by the most important conserved quantity. Himonas, Holliman and Grayshan \cite{cpde} established the ill-posedness of the Degasperis-Procesi equation in $H^s(\R)$ with $s<3/2$. Himonas, Grayshan and Holliman \cite{jns16} used the method of peakon-antipeakon solutions to establish the ill-posedness of the b-equations in
$H^s(\R)$ when $b > 1$, and Novruzov \cite{nov} also considered the ill-posedness for the b-family
of equations with non-zero dispersion coefficient in $H^s(\R)$ when $b<1$. Himonas, Holliman and Kenig \cite{siam} obtained the ill-posedness for the Novikov equation. We should mention that, all the ill-posedness results mentioned above heavily rely on the construction of peakon-antipeakon solutions. However, much less work has been done on the rod equation \eqref{r} in the low-regularity Sobolev spaces, and it is not clear that whether the method of peakon-antipeakon solutions is applied to the rod equation. On the other hand, the reason for revisiting the rod \eqref{r} is that, for $C^\infty$-smooth initial data, the method of peakon-antipeakon solutions is truly invalid even for the Camassa-Holm equation and Degasperis-Procesi equation since the peakon-antipeakon solutions is not in $C^1$.
{\it
An interesting open problem is that the smooth solution of the rod equation \eqref{r} exhibits the norm inflation phenomena in the low-regularity Sobolev setting.} In this paper, we present a new method to prove that the smooth solution of the rod equation \eqref{r} in the low regularity setting also exhibits the norm inflation phenomena.

\subsection{Main results}\label{subsec4}
Now let us state our main results of this paper.
\begin{theorem}(Blow-up)\label{th1}
Assume that $u_0\in H^3(\R)$.
 \begin{enumerate}
   \item Let $\gamma> 0$. If there exists a point $x_0\in \R$ such that $u'_0(x_0)<-C_\gamma \|u_0\|_{H^1}$ with $C_\gamma=\sqrt{1+\frac{3}{\gamma}}$, the solution of the rod equation \eqref{r} blows up in finite time.
   \item Let $\gamma< 0$. If there exists a point $x_0\in \R$ such that $u'_0(x_0)>C_\gamma \|u_0\|_{H^1}$ with $C_\gamma=\sqrt{1-\frac{3}{\gamma}}$, the solution of the rod equation \eqref{r} blows up in finite time.
 \end{enumerate}
\end{theorem}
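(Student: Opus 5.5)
We argue by characteristics and a Riccati-type inequality for the slope. Take $\gamma>0$ first; the case $\gamma<0$ is symmetric. Let $q(t,x)$ solve
\[
\pa_tq(t,x)=\gamma u(t,q(t,x)),\qquad q(0,x)=x.
\]
Since $u_0\in H^3$, the local theory gives $u\in C([0,T^*),H^3)$, so $q$ is a well-defined $C^1$ flow.

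Differentiate \eqref{r} in $x$ and use $\pa_x^2G*f=G*f-f$. This gives
\[
\pa_tu_x+\gamma u u_{xx}+\gamma u_x^2=\frac{3-\gamma}{2}u^2+\frac\gamma2u_x^2-G*\Big(\frac{3-\gamma}{2}u^2+\frac\gamma2u_x^2\Big).
\]
Set $M(t)=u_x(t,q(t,x_0))$. Along the characteristic,
\[
M'(t)=-\frac\gamma2M^2+\frac{3-\gamma}{2}u^2-G*\Big(\frac{3-\gamma}{2}u^2+\frac\gamma2u_x^2\Big).
\]

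The key step is a lower bound for the convolution term that cancels the $u^2$ term up to a constant times $\|u\|^2_{H^1}$. Rewrite the forcing as
\[
\frac{3-\gamma}{2}u^2-G*\Big(\frac{3-\gamma}{2}u^2+\frac\gamma2u_x^2\Big)
=\frac32u^2-\frac\gamma2u^2-\frac32\,G*u^2+\frac\gamma2\,G*(u^2-u_x^2)-\frac\gamma2\,G*(2u^2)+\dots
\]
The cleanest route is the standard estimate $G*(u^2+\frac12u_x^2)\ge\frac12u^2$. It follows by splitting the convolution at $x$ and using $e^{-|x-y|}(u^2+u_y^2)\ge \pm e^{-|x-y|}\pa_y(u^2)$ with the appropriate sign on each side. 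From this, together with $\|u\|_{L^\infty}^2\le\frac12\|u\|_{H^1}^2$ and $G*f\ge0$ for $f\ge 0$, I expect to reach
\[
M'(t)\le-\frac\gamma2M^2+\frac{\gamma+3}{2}\|u\|^2_{L^\infty}\le -\frac\gamma2M^2+\frac{\gamma+3}{2}\|u_0\|^2_{H^1},
\]
using conservation of $E(u)=\|u\|_{H^1}^2$. The remaining bookkeeping is to split $G*(\frac{3-\gamma}2u^2+\frac\gamma2u_x^2)$ according to the sign of $3-\gamma$, handling $\gamma\le3$ and $\gamma>3$ separately, and check that in each case the resulting constant is at most $\frac{\gamma+3}{2}\|u_0\|_{H^1}^2$ (up to the sharp Sobolev constant). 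This gives $K:=\frac{\gamma+3}{\gamma}\|u_0\|^2_{H^1}=C_\gamma^2\|u_0\|_{H^1}^2$, so that
\[
M'\le-\frac\gamma2\big(M^2-K\big).
\]

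Now apply the Riccati comparison. The hypothesis gives $M(0)=u_0'(x_0)<-\sqrt K$. Then $M'<0$, so $M(t)<-\sqrt K$ for all $t$ in the lifespan, and the comparison solution $M'=-\frac\gamma2(M^2-K)$ reaches $-\infty$ at the finite time
\[
T=\frac1{\gamma\sqrt K}\log\frac{M(0)-\sqrt K}{M(0)+\sqrt K}.
\]
Hence $u_x$ is unbounded below before time $T$. If instead $T^*=\infty$, then $u_x\in L^\infty$ on every compact time interval, a contradiction. So the solution blows up, consistent with the stated criterion $\inf\gamma u_x\to-\infty$.

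For $\gamma<0$, set $\tilde M=-M$ in the same characteristic equation. The quadratic term becomes $-\frac{|\gamma|}{2}\tilde M^2$ and the forcing is bounded by $\frac{3-\gamma}{2}\|u_0\|_{H^1}^2$, which yields $C_\gamma=\sqrt{1-3/\gamma}$.

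The main obstacle is getting the convolution estimate with exactly the constant $C_\gamma$. If the straightforward bound loses constants, I would optimize the pointwise inequality $G*(au^2+bu_x^2)\ge c\,u^2$ over the available combinations before closing the Riccati argument.
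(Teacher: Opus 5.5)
Your argument is essentially the paper's proof: differentiate \eqref{r}, follow $u_x$ along the flow of $\gamma u$, bound the forcing $V=\frac{3-\gamma}{2}u^2-G*\big(\frac{3-\gamma}{2}u^2+\frac\gamma2u_x^2\big)$ using $E(u)=\|u_0\|_{H^1}^2$ and $\|u\|_{L^\infty}^2\le\frac12\|u_0\|_{H^1}^2$, and close with a Riccati comparison. Two differences are minor: the paper uses the cruder bound $f'\le-a(f+\lambda)^2$ of Lemma \ref{le5} instead of your exact logarithmic time, and it only gestures at $\gamma<0$, which your $\tilde M=-M$ computation handles directly.

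The ``main obstacle'' you flag is not one, because $C_\gamma$ has ample slack. The paper simply uses
$\|V\|_{L^\infty}\le\frac{|3-\gamma|}{2}\|u\|_{L^\infty}^2+\|G\|_{L^\infty}\big\|\frac{3-\gamma}{2}u^2+\frac\gamma2u_x^2\big\|_{L^1}\le\frac{3+\gamma}{2}\|u_0\|_{H^1}^2$.
Your one-sided route via $G\ge0$ and $\int G=1$, split by the sign of $3-\gamma$, also works and is sharper. So the Camassa--Holm bound $G*(u^2+\frac12u_x^2)\ge\frac12u^2$ is never needed.
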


\begin{theorem}(Norm inflation for $H^\infty$-smooth data)\label{th2}
Let $\gamma\neq  0$ and $s\in(1,\frac32)$.
For any $\ep> 0$, there exists a solution $u(t,x)\in C^1([0,T_0];H^\infty(\R))$ of the Cauchy problem \eqref{r} and $0 < T_0 < \ep$ such that $u_0 \in {\cal S}(\R)$ satisfying
\begin{align*}
\|u_0\|_{H^s}\leq \ep\quad\text{but}\quad\|u(T_0)\|_{H^s} > \frac{1}{\ep}.
\end{align*}
\end{theorem}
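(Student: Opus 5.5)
We argue by scaling plus a Lagrangian analysis of the slope along a characteristic, in the spirit of Theorem \ref{th1}. Take $\gamma>0$; the case $\gamma<0$ is symmetric after changing the sign of the data.

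\emph{Choice of data.} Fix $\phi\in\mathcal S(\R)$ with $\phi'(0)=-1$ and $\phi'\le -\frac12$ on $[-1,1]$. Set
$$u_0(x)=A\,\lambda^{-1}\phi(\lambda x),\qquad \lambda\gg1 .$$
Then $u_0'(x)=A\phi'(\lambda x)$, so $u_0'\le -A/2$ on the interval $I_0=[-\lambda^{-1},\lambda^{-1}]$. The norms scale as
$$\|u_0\|_{L^2}\approx A\lambda^{-3/2},\qquad \|u_0\|_{\dot H^1}\approx A\lambda^{-1/2},\qquad \|u_0\|_{\dot H^s}\approx A\lambda^{s-3/2}.$$
Choosing $A=\lambda^{\alpha}$ with $0<\alpha<\frac32-s$ gives $\|u_0\|_{H^s}\le\ep$ for $\lambda$ large. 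It also gives $\|u_0\|_{H^1}\ll A$, so the hypothesis of Theorem \ref{th1} holds on the whole interval $I_0$.

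\emph{Riccati dynamics along characteristics.} Let $q(t,x)$ be the flow of $\gamma u$. Differentiating \eqref{r} gives
$$\frac{d}{dt}u_x(t,q)=-\frac\gamma2u_x^2+\frac{3-\gamma}{2}u^2-G*\Big(\frac{3-\gamma}2u^2+\frac\gamma2u_x^2\Big).$$
By conservation of $E$, the terms other than $-\frac\gamma2u_x^2$ are bounded by $C\|u_0\|_{H^1}^2\ll A^2$. So for $x\in I_0$ we have a comparison with the Riccati equation $y'=-\frac\gamma2 y^2\pm K$, where $K\approx\|u_0\|_{H^1}^2$. Explicitly,
$$u_x(t,q(t,x))\le \frac{u_0'(x)}{1+\frac\gamma2 u_0'(x)t}+O(\sqrt K)\qquad\text{for } t<T_x=\frac{2}{\gamma|u_0'(x)|}.$$

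Choose $T_0$ slightly before the breaking time of the point $x=0$, namely
$$T_0=\frac{2}{\gamma A}(1-\eta),$$
with $\eta$ small, possibly depending on $\lambda$. This gives $T_0<\ep$, and the solution remains smooth on $[0,T_0]$. This uses Theorem \ref{th1}'s mechanism together with local well-posedness in $H^3$; the solution lies in $H^\infty$ because the data are Schwartz.

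\emph{Lower bound at time $T_0$ (the main obstacle).} On the subinterval $J=\{x:|u_0'(x)|\ge A(1-\eta/2)\}$, which has length $\approx\lambda^{-1}\sqrt\eta$, we get $u_x(T_0,q(T_0,x))\lesssim-A/\eta$. The flow compresses $J$: from $q_x=\exp\big(\gamma\int_0^t u_x\big)$ and the Riccati comparison, $q_x(T_0,x)\approx(1+\frac\gamma2u_0'T_0)^2\approx\eta^2$. Hence $q(T_0,J)$ has length about $\lambda^{-1}\eta^{5/2}$ and carries slope of size $A/\eta$.

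Note that $\int|u_x|^2$ over this image is about $A^2\lambda^{-1}\eta^{1/2}$, consistent with conservation of $E$. For the $\dot H^s$ norm, $s>1$, we use the elementary lower bound
$$\|f\|_{\dot H^s}\gtrsim \frac{|f(b)-f(a)|}{|b-a|^{\,s-1/2}}\qquad(s>1/2)$$
applied to $f=u(T_0)$ on $q(T_0,J)$. The endpoint difference $u(b)-u(a)$ is conserved to leading order, because $u$ itself changes little in time $T_0$: $\partial_tu$ is controlled via $E$ and $\gamma u u_x$ along characteristics. So this difference stays $\approx A\lambda^{-1}\sqrt\eta$, the change of $u_0$ across $J$. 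This gives
$$\|u(T_0)\|_{H^s}\gtrsim A\lambda^{-1}\eta^{1/2}\,\big(\lambda^{-1}\eta^{5/2}\big)^{-(s-1/2)},$$
which is larger than $1/\ep$ once $\eta$ is taken as a small enough negative power of $\lambda$.

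The delicate step is making the comparison errors $O(\sqrt K\,T_0)$ uniform on $J$ while keeping $\eta$ small compared with the leading Riccati terms. I would first try taking $\eta=\lambda^{-\beta}$ with $\beta$ small, and verifying $K\,T_0^2\ll\eta^2$.
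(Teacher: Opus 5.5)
Your lower bound at time $T_0$ rests on a false claim. The oscillation $u(T_0,b)-u(T_0,a)$ across $q(T_0,J)$ does \emph{not} stay $\approx A\lambda^{-1}\sqrt\eta$.

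Your justification only gives $|\frac{d}{dt}u(t,q(t,x))|=|\partial_xG*(\frac{3-\gamma}{2}u^2+\frac\gamma2u_x^2)|\lesssim\|u_0\|_{H^1}^2\approx A^2\lambda^{-1}$. Over the time $T_0\approx A^{-1}$ this is an error $\approx A\lambda^{-1}$, which is $\eta^{-1/2}$ times larger than the quantity you want to preserve. Your own numbers in fact refute the claim: by Cauchy--Schwarz,
\[
|u(T_0,b)-u(T_0,a)|\le |q(T_0,J)|^{1/2}\,\|u_x(T_0)\|_{L^2(q(T_0,J))}\approx(\lambda^{-1}\eta^{5/2})^{1/2}(A^2\lambda^{-1}\eta^{1/2})^{1/2}=A\lambda^{-1}\eta^{3/2}.
\]
Equivalently, along characteristics $\frac{d}{dt}(u_xq_x)=q_x(\frac\gamma2u_x^2+V)$ with $|V|\lesssim\|u_0\|_{H^1}^2$. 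Hence $u_xq_x\approx u_0'(1+\frac\gamma2u_0't)$, not $u_0'$ as for Burgers. Through the jump of $\partial_xG$ at the origin, the nonlocal term drains the oscillation of $u$ across the region where $u_x^2$ concentrates.

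With the correct oscillation your estimate becomes $\|u(T_0)\|_{H^s}\gtrsim A\lambda^{s-3/2}\eta^{\frac{11}{4}-\frac{5s}{2}}$, which is a gain only for $s>\frac{11}{10}$. Combined with your constraint $KT_0^2\ll\eta^2$ (i.e.\ $\eta\gg\lambda^{-1/2}$) and $\alpha<\frac32-s$, it gives no inflation for $s\in(1,\frac{11}{10}]$. This is a defect of the data, not of the estimate. When $|u_0'|$ has a nondegenerate maximum, at the breaking time $u_x\sim x^{-2}$ and $q(t_b,x)-q(t_b,0)\sim x^5$ in the Lagrangian variable. So the pre-shock profile is $u\sim|y-y_*|^{3/5}$, which lies in $H^s$ for $s<\frac{11}{10}$ with norm of the same size $A\lambda^{s-3/2}$ as the data (in the Riccati approximation).

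The missing idea is the paper's plateau. The data $u_0=J_{q_0^2}*v_0$ satisfy $u_0'\equiv-p_0e^{-q_0}$ on the whole interval $|x|\le q_0-q_0^2$ (so $A=p_0$, $\lambda=q_0^{-1}$ in your notation). Hence the entire interval breaks simultaneously and is compressed uniformly, to length $\approx\lambda^{-1}\eta^2$ with slope $\approx A/\eta$ and oscillation $\approx A\lambda^{-1}\eta$. Then either your H\"older bound, or the paper's lower bound for $\int_{I(t)}(-\partial_xu)^{p_s}\,\mathrm{d}x$ on the transported interval together with $H^s\hookrightarrow W^{1,p_s}$, $p_s=(\frac32-s)^{-1}$, gives $\|u(T_0)\|_{H^s}\gtrsim A\lambda^{s-3/2}\eta^{2-2s}$. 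Take $\eta=\lambda^{-1/2+\delta}$ with $\delta>0$ small (just before the paper's $T^1_{\min}$) and $A=\lambda^\alpha$ with $\alpha\in(\frac52-2s,\frac32-s)$. This window is nonempty exactly when $s>1$, and the bound diverges while $\|u_0\|_{H^s}\to0$.

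You would also need $\phi'\ge-1$ everywhere, so that no other point breaks before $T_0$.
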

\begin{remark}\label{re1}
Theorem \ref{th2} indicates the ill-posedness of \eqref{r} in $H^s(\R)$ with $s\in(1,\frac32)$ in the sense that the solution map $H^s\ni u_0 \mapsto u\in H^s$ is strong instability with respect to the initial data. We should mention that, for the sharp end-point case of $H^1$, Theorem \ref{th2} is impossible by the conservation of energy. Moreover, this theorem is a very interesting result in its own right. Indeed, those smooth solutions exhibit the same norm inflation phenomena with the weak ones considered in literature \cite{Byers,cpde,jns16,nov,siam}.
\end{remark}
\begin{remark}\label{re2}
Compared with the method of peakon-antipeakon solutions for proving the norm inflation phenomena, our idea is completely different. We mention that the method used here for proving ill-posedness for the rod equation is suitable for other shallow water wave equations.
\end{remark}
\begin{remark}
We summary the well-posedness/ill-posdeness results of the rod equation in the Sobolev spaces. This can be seen clearly from the Table 1.
\end{remark}
\begin{table}[http]
      \centering
      \begin{tabular}{l|c|c}\hline
       References&Range&Results\\\hline
        \cite{mz2000,const,yin,zhou1,zhou2}&\makecell[c]{$s>\fr32$}&LWP \\\hline
        \cite{Guo-Yin,guo}&$s=\fr32$&Norm inflation
        \\\hline
        Theorem \ref{th1}&$1<s<\fr32$& Norm inflation
          \\\hline
        Lemma \ref{le2}&$s=1$& No Norm inflation \\\hline
        \end{tabular}
        \caption{Well/Ill-posedness of \eqref{r} in $H^s$}
        \end{table}
\subsection{Sketch of the Key Ideas}\label{subsec5}
Let us make comments on the main ideas in proving Theorem \ref{th2}. When proving the norm inflation in the low-regularity setting, we meet two major difficulties in our analysis: the first one appears from the constructions of initial data and the other is to develop the structure of the equation which leads to the norm inflation.
Firstly, our new contribution is to construct the initial data which satisfies that
\begin{itemize}
  \item $\|u_0\|_{H^s}$ is equivalent to $\|u_0\|_{W^{1,p_s}}$ with $p_s=\frac{1}{\fr32-s}$;
  \item the principal part of  $\|u_0\|_{W^{1,p_s}}$ is  $\f(\int^{q_0}_{-q_0}|\pa_xu_0|^{p_s}\dd x\g)^{\frac1{p_s}}$.
\end{itemize}
Secondly, based on the construction of initial data, we find that the lifespan $T^*$ of solution satisfies that $T_{\min}\leq T^*\leq T_{\max}$, where $T_{\max}$ is close to $T_{\min}$. Furthermore, we make an important observation that, $\pa_xu(t,x)$ is large enough uniformly in $(t,x)\in(0,T^*)\times(\phi(t,-q_0),\phi(t,q_0))$, where $\phi(t,x)$ is the flow map induced by the solution $u$. Finally, we find that the norm inflation of solution comes from the crucial quantity $\f(\int^{\phi(t,q_0)}_{\phi(t,-q_0)}|\pa_xu|^{p_s}\dd x\g)^{\frac{1}{p_s}}$.
In particular, these two key observations rely on the continuity of solution to the rod equation. To overcome the difficulty, we can use the smooth solution to the rod equation with the mollified smooth version of $u_0$ and proceed with the above steps with minor modifications.
\subsection{Organization of our paper}\label{subsec6}
In Section \ref{sec2}, we list some notations and recall some known results which will be used in the sequel. In Section \ref{sec3} we present the proof of Theorem \ref{th1} by proving the blow-up phenomena. In Section \ref{sec4} we prove norm inflation  phenomena by dividing it into several parts:
(1) The Example Class for the Non-smooth Initial Data;
(2) Construction of Smooth Initial Data;
(3) Upper and Lower Bound of Lifespan;
(4) Norm Inflation for Smooth Initial Data.

\section{Preliminaries}\label{sec2}
\subsection{Notation}
For the reader's convenience, we  collect some notation and recall a few function spaces and norms that will be used frequently throughout this paper.
\begin{itemize}
  \item We use the letter $C$ to  denote generic constants whose
value can change from one line to another or even within a single line. $C_{s,t}$ stands for a positive constant that may still depend on $s$ and $t$. The symbol $A\approx B$ means that $C^{-1}B\leq A\leq CB$.
  \item Given a Banach space $X$, we denote its norm by $\|\cdot\|_{X}$.
  \item We denote by $W^{1,p}$ the standard Sobolev space on $\R$ of $L^p$ functions whose derivative also belongs to $L^p$.
\item For $I\subset\R$, we denote by $C(I;X)$ the set of continuous functions on $I$ with values in $X$. Sometimes we will denote $L^p(0,T;X)$ by $L_T^p(X)$.
  \item  We use $\mathcal{S}(\R)$ to denote Schwartz functions spaces on $\R$. Let us recall that for all $f\in \mathcal{S}$, the Fourier transform $\widehat{f}$ is defined by
$$
\widehat{f}(\xi)=\int_{\R}e^{-ix\xi}f(x)\dd x \quad\text{for any}\; \xi\in\R.
$$
  \item {\bf Sobolev space}. For $s \in \mathbb{R}$, the inhomogeneous Sobolev space $H^s=H^s(\mathbb{R})$ is defined as

$$
H^s(\mathbb{R}) :=\left\{f \in L^2(\mathbb{R}):\|f\|_{H^s(\mathbb{R})}<+\infty\right\},
$$
with norm
$$
\|f\|_{H^s}=\|f\|_{L^2}+\|f\|_{\dot{H}^s},
$$
where $\|f\|_{\dot{H}^s}=\left\|\Lambda^s f\right\|_{L^2}$ and $\Lambda^s$ is defined by $\widehat{\Lambda^s f}(\xi)=|\xi|^s \widehat{f}(\xi)$.
\item {\bf Friedrichs mollifiers}. Let $J(x)$ be a non-negative $C^\infty(\mathbb{R})$ function supported in the interval $[-1, 1]$ with the form
\bbal
J(x)=
\begin{cases}
\frac{1}{c}e^{\frac{1}{x^2-1}}, & |x|< 1,\\
0, & |x|\geq 1,
\end{cases}
\end{align*}
where we choose
$c=\int_{|x|<1}e^{\frac{1}{x^2-1}}\dd x$ such that $\int_{\R}J(x)\dd x=1$.

The operator $J_\ep$ is  the Friedrichs mollifier
defined by $J_{\ep}(x)=\frac1\ep J(\frac x\ep)$ for each $\ep\in (0, 1]$ and $x \in \mathbb{R}$, then it is not difficult to verify that $J_{\varepsilon}$ is supported in the ball of radius $\varepsilon$ about the origin and $J_{\varepsilon} u$ is real-valued for any real-valued function $u$.
\end{itemize}

\subsection{Known Lemmas}
The following Lemmas have been established in different contents. We refer to see the references \cite{mz2000,const,yin,zhou1,zhou2}.
\begin{lemma}\label{le1} Let $u_0 \in H^s(\mathbb{R})$ and $s>\frac{3}{2}$. Then there exists a time $T=T\left(\left\|u_0\right\|_{H^s}\right)>0$ to ensure that the rod equation \eqref{r} possesses a unique solution
$$
u\in C\left([0, T) ; H^s(\mathbb{R})\right) \cap C^1\left([0, T) ; H^{s-1}(\mathbb{R})\right).
$$
Moreover, the solution $u$ depends continuously on the initial data $u_0$.
\end{lemma}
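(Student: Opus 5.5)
The plan is to apply Kato's abstract theory for quasilinear evolution equations to the rod equation written in the nonlocal transport form \eqref{r}:
\begin{align*}
\pa_t u+A(u)u=f(u),
\end{align*}
where $A(u)=\gamma u\pa_x$ and $f(u)=-\pa_xG*\bigl(\frac{3-\gamma}{2}u^2+\frac\gamma2(\pa_xu)^2\bigr)$. We take $X=H^{s-1}$, $Y=H^s$ and the isomorphism $S=\Lambda_1:=(1-\pa_x^2)^{1/2}$ from $Y$ onto $X$. The argument then reduces to checking Kato's hypotheses.

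\textbf{Step 1: quasi-$m$-accretivity.} For $u$ in a ball of $H^s$, we show that $A(u)$ is quasi-$m$-accretive in $H^{s-1}$. The core estimate is
\begin{align*}
(A(u)w,w)_{L^2}=-\frac\gamma2\int\pa_xu\,w^2\,\dd x\geq -C\|\pa_xu\|_{L^\infty}\|w\|_{L^2}^2.
\end{align*}
It is lifted to $H^{s-1}$ using the commutator estimate for $[\Lambda_1^{s-1},u]\pa_x$. The range condition $(\lambda+A(u))H^{s}$ being dense follows from a duality argument. The bound $\|\pa_xu\|_{L^\infty}\lesssim\|u\|_{H^s}$ holds precisely because $s>3/2$.

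\textbf{Step 2: commutator and boundedness conditions.} We verify that $S A(u) S^{-1}-A(u)=[\Lambda_1,\gamma u\pa_x]\Lambda_1^{-1}$ is bounded on $H^{s-1}$ uniformly for $u$ in bounded sets of $H^s$. This comes from the Kato--Ponce commutator estimate
\begin{align*}
\|[\Lambda_1^{r},u]\pa_x v\|_{L^2}\lesssim \|u\|_{H^s}\|v\|_{H^{r}},
\end{align*}
valid for suitable $0\le r\le s$ with $s>3/2$. We also check the Lipschitz bound
\begin{align*}
\|(A(u)-A(v))w\|_{H^{s-1}}\lesssim\|u-v\|_{H^{s-1}}\|w\|_{H^s}.
\end{align*}

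\textbf{Step 3: the nonlinearity.} We show that $f$ is bounded on bounded sets of $H^s$ and locally Lipschitz both $H^s\to H^s$ and $H^{s-1}\to H^{s-1}$. The operator $\pa_xG*$ gains one derivative. For $H^s\to H^s$ we use that $H^{s-1}$ is an algebra when $s-1>1/2$, together with the bound $\|(\pa_xu)^2-(\pa_xv)^2\|_{H^{s-1}}\lesssim(\|u\|_{H^s}+\|v\|_{H^s})\|u-v\|_{H^s}$. The $H^{s-1}$ Lipschitz bound in the weaker norm uses the product estimate $\|fg\|_{H^{s-2}}\lesssim\|f\|_{H^{s-1}}\|g\|_{H^{s-2}}$; this requires $s-1>1/2$ and $|s-2|\le s-1$, both of which hold.

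\textbf{Conclusion and main obstacle.} Kato's theorem then yields existence, uniqueness, the regularity $u\in C([0,T);H^s)\cap C^1([0,T);H^{s-1})$, and continuous dependence, with $T$ depending only on $\|u_0\|_{H^s}$. The main obstacle is the low-$s$ range $3/2<s<2$. There the negative-index product estimate in Step 3 and the commutator bound in Step 2 are the delicate points. I would treat them first with paradifferential (Bony) decomposition and only then fall back on the classical Kato--Ponce estimates. Continuous dependence in $H^s$ itself, rather than only in $H^{s-1}$, is supplied by Kato's theorem once Steps 1--3 are established. Alternatively, it can be obtained by a Bona--Smith mollification argument using $J_\ep$.
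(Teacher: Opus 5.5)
Your proposal is correct and follows the same route the paper relies on. The paper gives no proof of its own; it cites Yin and related works, which prove the lemma by Kato's quasilinear semigroup theorem with $X=H^{s-1}$, $Y=H^s$, $S=\Lambda_1$, $A(u)=\gamma u\partial_x$ and the nonlocal term as $f(u)$, verifying the same quasi-$m$-accretivity, commutator and Lipschitz hypotheses you list. The one hypothesis you leave implicit is the Lipschitz bound $\|B(u)-B(v)\|_{\mathcal{L}(H^{s-1})}\lesssim\|u-v\|_{H^s}$ for $B(u)=\gamma[\Lambda_1,u]\partial_x\Lambda_1^{-1}$, and it is immediate from your boundedness estimate because $B(u)$ is linear in $u$.
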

The maximum value of $T^*$ in Lemma \ref{le1} is called the maximum existence time or the lifespan of the solution in general. If $T^*<\infty$ , that is $\limsup_{t \rightarrow T^*} \|u\|_{H^s}=\infty$, we say that the solution blows up in finite time.
\begin{lemma}\label{le2} Let $u_0 \in H^s(\mathbb{R})$ and $s>\frac{3}{2}$. If $u$ satisfies the rod equation \eqref{r}, then
$$
\int_{\mathbb{R}}\left(u^2+(\pa_xu)^2\right) \dd x=\int_{\mathbb{R}}\left(u_0^2+(\pa_xu_0)^2\right) \dd x.
$$
In particular, $$
\|u\|_{L^{\infty}} \leqslant \frac{1}{\sqrt{2}}\left\|u_0\right\|_{H^1}.
$$
This tells us that, the invariance of the Sobolev $H^1$-norm of the solution implies that $u(x, t)$ remains uniformly bounded up to the time $T^*$.
\end{lemma}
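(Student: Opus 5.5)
The plan is to prove the energy identity by direct differentiation for smooth solutions and then pass to general data by density. Since Lemma \ref{le1} only gives $u\in C([0,T);H^s)\cap C^1([0,T);H^{s-1})$ with $s>3/2$, some terms in $\frac{\dd}{\dd t}\|u\|_{H^1}^2$ are not obviously defined. I would first take $u_0\in H^\infty(\R)$ (for instance $J_\delta * u_0$, or a data in $H^4$). By the persistence part of Lemma \ref{le1}, the solution is then smooth enough that all manipulations below are legitimate, with $u,\pa_xu,\pa_x^2u,\pa_x^3u$ decaying at infinity.

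The computation is cleanest in the original form \eqref{or1}, written as
\[
(1-\pa_x^2)u_t=-3uu_x+\gamma(2u_xu_{xx}+uu_{xxx}).
\]
Multiplying by $u$ and integrating, the left side equals $\int(uu_t+u_xu_{xt})\dd x=\frac12\frac{\dd}{\dd t}\int(u^2+u_x^2)\dd x$ after one integration by parts. The right side vanishes term by term.
\begin{itemize}
\item The first term gives $\int 3u^2u_x\dd x=\int\pa_x(u^3)\dd x=0$.
\item For the dispersive part, $\int 2uu_xu_{xx}\dd x=\int u\,\pa_x(u_x^2)\dd x=-\int u_x^3\dd x$.
\item Integrating by parts once, $\int u^2u_{xxx}\dd x=-\int 2uu_xu_{xx}\dd x=\int u_x^3\dd x$.
\end{itemize}
The last two contributions cancel. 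Hence $\frac{\dd}{\dd t}E(u)=0$, and $E(u(t))=E(u_0)$ on the lifespan.

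For general $u_0\in H^s$ with $s>3/2$, I would approximate by $u_0^n=J_{1/n}*u_0\to u_0$ in $H^s$. The continuous dependence in Lemma \ref{le1} gives $u^n\to u$ in $C([0,T];H^s)$ for any $T<T^*$; one needs that the $u^n$ exist on $[0,T]$ for large $n$, which is part of the standard statement of continuous dependence. Since $H^s\hookrightarrow H^1$, $E(u^n(t))\to E(u(t))$, and the identity passes to the limit. This approximation step is the only delicate point. The uniform existence time comes from $T=T(\|u_0\|_{H^s})$ together with the standard extension argument, so I do not expect a real obstacle.

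Finally, the $L^\infty$ bound follows from the elementary inequality
\[
u^2(x)=\int_{-\infty}^x uu_y\dd y-\int_x^\infty uu_y\dd y\le\int_{\R}|u||u_x|\dd x\le\frac12\int_{\R}(u^2+u_x^2)\dd x,
\]
which gives $\|u(t)\|_{L^\infty}\le\frac1{\sqrt2}\|u(t)\|_{H^1}=\frac1{\sqrt2}\|u_0\|_{H^1}$ by the conservation just proved. Here $\|\cdot\|_{H^1}$ is read as $E^{1/2}$.
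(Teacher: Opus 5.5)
Your proof is correct. It is the standard argument from the works the paper cites for this lemma; the paper itself gives no proof. That argument reduces to smooth data by density and continuous dependence, multiplies \eqref{or1} by $u$ so that the $\gamma$-terms cancel via $\int 2uu_xu_{xx}\,\dd x=-\int u_x^3\,\dd x=-\int u^2u_{xxx}\,\dd x$, and bounds $u^2(x)\le\int_{\R}|u\,u_x|\,\dd x\le\frac12E(u)$.

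Two small points to tidy. First, persistence of regularity is not literally part of Lemma \ref{le1} as stated; it follows from Kato's theory, or from the $s$-independence of the blow-up criterion in Lemma \ref{le3}. Second, with the paper's convention $\|f\|_{H^1}=\|f\|_{L^2}+\|\pa_xf\|_{L^2}$, only $E$ is conserved rather than this norm, but $E^{1/2}\le\|\cdot\|_{H^1}$, so the $L^\infty$ bound still follows.
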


\begin{lemma}\label{le3} Let $u_0 \in H^s(\mathbb{R})$ and $s>\frac{3}{2}$. Suppose that $T^*$ is the lifespan of the solution for the rod equation \eqref{r}. If $T^*<\infty$, then $\limsup _{t \rightarrow T^*}\|u(t)\|_{H^s}=\infty$ and the precise blowup scenario of strong solutions is the following:
$$
T^*<\infty \Longleftrightarrow \liminf _{t \rightarrow T^*}\left\{\inf _{x \in \mathbb{R}} \gamma \pa_xu(t, x)\right\}=-\infty.
$$
\end{lemma}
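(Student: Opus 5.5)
We first show that the $H^s$-norm must blow up if $T^*<\infty$. We argue by contradiction using Lemma \ref{le1}. Suppose $T^*<\infty$ but $\|u(t)\|_{H^s}\le M$ on $[0,T^*)$. The existence time in Lemma \ref{le1} depends only on an upper bound for $\|u_0\|_{H^s}$, so there is $T(M)>0$ such that, restarting at $t_0=T^*-T(M)/2$, the solution extends beyond $T^*$. This contradicts maximality, hence $\limsup_{t\to T^*}\|u(t)\|_{H^s}=\infty$.

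For the equivalence, the direction "$\Leftarrow$" is elementary. If the liminf equals $-\infty$ at a finite time, that time cannot be interior to an interval where $u\in C([0,T];H^s)$, since $H^s\hookrightarrow W^{1,\infty}$ for $s>\frac32$ keeps $\pa_xu$ bounded on compact subintervals.

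For "$\Rightarrow$" we proceed in two steps.

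\emph{Step 1: only $\|\pa_xu\|_{L^\infty}$ matters.} Apply $\Lambda^s$ (or $(1-\pa_x^2)^{s/2}$) to \eqref{r}, pair with $\Lambda^s u$, and use the Kato--Ponce commutator estimate
$$\|[\Lambda^s,u]\pa_xu\|_{L^2}\les \|\pa_xu\|_{L^\infty}\|u\|_{H^s}.$$
For the nonlocal term, $\pa_xG*$ gains one derivative, and the product estimates give
$$\|u^2\|_{H^{s-1}}+\|(\pa_xu)^2\|_{H^{s-1}}\les \|u\|_{W^{1,\infty}}\|u\|_{H^s}.$$
Combining these,
$$\frac{\dd}{\dd t}\|u\|_{H^s}^2\le C\big(\|u\|_{L^\infty}+\|\pa_xu\|_{L^\infty}\big)\|u\|_{H^s}^2 .$$
By Lemma \ref{le2}, $\|u\|_{L^\infty}$ is bounded, so Gronwall's inequality shows that $\|u(t)\|_{H^s}$ stays bounded as long as $\int_0^t\|\pa_xu\|_{L^\infty}$ does. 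Hence if $T^*<\infty$, then $\|\pa_xu\|_{L^\infty}$ is unbounded near $T^*$.

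\emph{Step 2: one-sided control suffices.} This step is the heart of the argument. Differentiating \eqref{r} in $x$ and using $\pa_x^2G*f=G*f-f$, with $a=\frac{3-\gamma}{2}$ we get
$$\pa_t\pa_xu+\gamma u\pa_x^2u=-\frac\gamma2(\pa_xu)^2+au^2-G*\Big(au^2+\frac\gamma2(\pa_xu)^2\Big).$$
Let $q(t,x)$ solve $\pa_tq=\gamma u(t,q)$, $q(0,x)=x$. This flow is a $C^1$ diffeomorphism of $\R$. Set $w(t)=\gamma\pa_xu(t,q(t,x))$. Since $\|G\|_{L^\infty}=\frac12$ and by Lemma \ref{le2},
$$|au^2-G*(au^2+\tfrac\gamma2 u_x^2)|\le K:=C_\gamma\|u_0\|_{H^1}^2 .$$
Therefore
$$\Big|w'+\frac{w^2}{2}\Big|\le |\gamma|K .$$
Now suppose $\gamma\pa_xu\ge -N$ on $[0,T^*)\times\R$. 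The differential inequality $w'\le -\frac{w^2}{2}+|\gamma|K$ gives
$$w(t)\le \max\big(\|\gamma\pa_xu_0\|_{L^\infty},\sqrt{2|\gamma|K}\big).$$
Since $q(t,\cdot)$ is onto $\R$, this means $\gamma\pa_xu$ is bounded above as well. Thus $\|\pa_xu\|_{L^\infty}$ is bounded on $[0,T^*)$ when $\gamma\neq0$. For $\gamma=0$, Step 1 with the linear bound already shows that there is no blow-up. This contradicts Step 1, so $\liminf_{t\to T^*}\inf_x\gamma\pa_xu=-\infty$.

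The main technical obstacle is that the characteristic computation in Step 2 requires $\pa_x^2u$ to make sense pointwise. I would first carry out the argument for $u_0\in H^3$, where $u\in C([0,T^*);H^3)\subset C^1$ in $x$. For general $s>\frac32$ I would pass to the limit by approximating $u_0$ with $J_\ep u_0$, using the continuous dependence in Lemma \ref{le1} together with the fact that the $H^s$ and $H^3$ lifespans coincide (persistence, which again follows from the Step 1 estimate applied with $s=3$). The commutator and product estimates in Step 1 are standard, and I would quote them rather than reprove them.
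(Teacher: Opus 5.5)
The paper does not prove Lemma~\ref{le3}. It quotes it from \cite{mz2000,const,yin,zhou1,zhou2}, so there is no in-paper argument to match. Your proof is correct in substance. The continuation argument and Step~1 are the standard route of those references.

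Your Step~2 differs from the usual argument. The usual route takes $s=3$ and uses the identity
\begin{align*}
\frac{\dd}{\dd t}\int_{\R}m^2\,\dd x=-3\gamma\int_{\R}\pa_xu\,m^2\,\dd x-6(1-\gamma)\int_{\R}u\,\pa_xu\,m\,\dd x,\qquad m=u-\pa_x^2u.
\end{align*}
With this identity, $\gamma\pa_xu\ge-N$ bounds $\|u\|_{H^2}$, and hence $\|\pa_xu\|_{L^\infty}$, by Gronwall, with no characteristics and no pointwise use of $\pa_x^2u$.

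Your Riccati argument instead gives the hypothesis-free bound $\gamma\pa_xu\le\max(\|\gamma u_0'\|_{L^\infty},\sqrt{2|\gamma|K})$. It depends only on $\|u_0'\|_{L^\infty}$ and $\|u_0\|_{H^1}$, so it is uniform under $u_0\mapsto J_\ep u_0$. That makes the density step for $\frac32<s<3$ clean. By contrast, the $H^2$ bound from the energy identity is controlled by $\|J_\ep u_0\|_{H^2}$, which is not uniform in $\ep$ when $s<2$.

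When you write the density step, say explicitly what passes to the limit. It is this upper bound, via $\pa_xu^\ep\to\pa_xu$ uniformly on $[0,T]\times\R$ for each $T<T^*$, from continuous dependence and $H^{s-1}\hookrightarrow L^\infty$. The lower bound is the hypothesis on $u$ itself. Transferring the $H^3$ equivalence from $u^\ep$ to $u$ directly is not straightforward, since you only know $\liminf_{\ep\to0}T^*_\ep\ge T^*$.

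One small point concerns the direction ``$\Leftarrow$''. Read literally, the statement allows $T^*=\infty$, and then ``$\Leftarrow$'' is not vacuous: it asserts that a global solution cannot have $\liminf_{t\to\infty}\inf_x\gamma\pa_xu=-\infty$. Your elementary argument only covers the finite-time reading. The missing case follows from your Step~2, with the same approximation. If $\gamma\pa_xu(t_1,x_1)<-\sqrt{2|\gamma|K}$ at some time $t_1$, Lemma~\ref{le5} applied to $w$ from $t_1$ forces breaking in finite time. Hence every global solution satisfies $\gamma\pa_xu\ge-\sqrt{2|\gamma|K}$ for all $t\ge0$.
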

We consider the following ODE to find the flow map $\phi$ induced by the Lipschitz field $u$:
\begin{align}\label{ode}
\quad\begin{cases}
\frac{\dd}{\dd t}\phi(t,x)=\gamma u(t,\phi(t,x)),\;&(t,x)\in(0,T^*)\times\R,\\
\phi(0,x)=x,&x\in\R,
\end{cases}
\end{align}
where $T^*$ is the maximal existence time of the solution defined as in Lemma \ref{le3}.

Because the velocity field is Lipschitz, then we get that for $t\in[0,T^*)\times\R$
\bbal
\pa_x\phi(t,x)=\exp\left(\gamma\int^t_0\pa_x u(\tau,\phi(\tau,x))\dd \tau\right)>0.
\end{align*}
This shows that $\phi(t,\cdot)$ is an increasing diffeomorphism over $\R$, that is,  for all $x,y\in \R,$ there holds that $\phi(t,x)<\phi(t,y)$ if $x< y$. Applying the classical result in the theory of ordinary differential equations, we have
\begin{lemma}\label{le4} Assume $u_0 \in H^s(\mathbb{R})$ and $s>\frac{3}{2}$. If $T^*$ is defined as in Lemma \ref{le3}, then the ODE \eqref{ode} possesses a unique solution $\phi \in C^1([0, T) \times \mathbb{R}, \mathbb{R})$. In addition, the map $\phi(t, \cdot):\R\rightarrow\R$ is an increasing diffeomorphism of the line for every $t\in[0,T^*)$ satisfying $\pa_x\phi(t, x)>0$ in the domain $[0, T^*) \times \mathbb{R}$.
\end{lemma}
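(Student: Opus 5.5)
The statement just made is Lemma \ref{le4}, which asserts that the characteristic ODE \eqref{ode} is uniquely solvable with $\phi\in C^1$ and that $\phi(t,\cdot)$ is an increasing diffeomorphism of $\R$. The plan is to apply the standard Cauchy--Lipschitz theory to the field $\gamma u$. The regularity we need comes from Lemma \ref{le1}.

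\textbf{Regularity of the field.} Since $s>\frac32$, Lemma \ref{le1} gives $u\in C([0,T^*);H^s)\cap C^1([0,T^*);H^{s-1})$. By the Sobolev embedding $H^s(\R)\hookrightarrow C^1_b(\R)$ for $s>3/2$, on each compact interval $[0,T]\subset[0,T^*)$ both $u$ and $\pa_xu$ are bounded and jointly continuous in $(t,x)$. Joint continuity follows from continuity in time into $H^s$ combined with the embedding into $C^1_b$. Consequently $\gamma u$ is continuous in $(t,x)$ and globally Lipschitz in $x$, uniformly for $t\in[0,T]$.

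\textbf{Existence, uniqueness and $C^1$ regularity.}
\begin{itemize}
\item Picard--Lindel\"of gives a unique local solution $\phi(\cdot,x)$ for each $x$.
\item The bound $|\dot\phi|\le|\gamma|\|u\|_{L^\infty}$ excludes escape to infinity, so the solution extends to all of $[0,T^*)$. The uniform bound on $\|u\|_{L^\infty}$ also follows from Lemma \ref{le2}.
\item Because the field is $C^1$ in $x$ with $\pa_x u$ continuous in $(t,x)$, the classical theorem on differentiable dependence on initial data gives $\phi\in C^1([0,T^*)\times\R)$.
\item Differentiating \eqref{ode} in $x$ shows that $\pa_x\phi$ solves the linear equation $\frac{\dd}{\dd t}\pa_x\phi=\gamma\pa_xu(t,\phi)\pa_x\phi$ with $\pa_x\phi(0,x)=1$. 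Hence
$$\pa_x\phi(t,x)=\exp\left(\gamma\int_0^t\pa_xu(\tau,\phi(\tau,x))\dd\tau\right)>0.$$
\end{itemize}

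\textbf{Diffeomorphism property.} Strict positivity of $\pa_x\phi$ makes $\phi(t,\cdot)$ strictly increasing. The bound $|\phi(t,x)-x|\le |\gamma| t\|u\|_{L^\infty_tL^\infty_x}$ gives $\phi(t,x)\to\pm\infty$ as $x\to\pm\infty$, so $\phi(t,\cdot)$ is surjective. Its inverse is $C^1$ by the inverse function theorem.

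The only delicate point is the joint continuity of $\pa_xu$ in $(t,x)$, which is needed for the $C^1$ dependence. The step above handles it through the embedding $H^s\hookrightarrow C^1_b$ together with $u\in C([0,T^*);H^s)$.
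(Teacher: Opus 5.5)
Your proposal is correct and follows the same route as the paper. The paper simply invokes classical Cauchy--Lipschitz theory for the Lipschitz field $\gamma u$, together with the formula $\pa_x\phi(t,x)=\exp\bigl(\gamma\int_0^t\pa_xu(\tau,\phi(\tau,x))\,\dd\tau\bigr)>0$. You additionally supply details the paper leaves implicit: joint continuity of $\pa_xu$ via $H^s\hookrightarrow C^1_b$, global-in-time existence of the flow, and surjectivity of $\phi(t,\cdot)$ from the bounded displacement $|\phi(t,x)-x|\le|\gamma|t\|u\|_{L^\infty_tL^\infty_x}$.
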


The following lemma concerns the Riccati-type differential inequality.

\begin{lemma}\label{le5} Let $a, b,c>0$ and $\lambda=\sqrt{\frac{b}{a}}c$. If $f(t) \in C^1(\mathbb{R})$ satisfies
\begin{align}\label{ineq-ode}
\begin{cases}
f^{\prime}(t) \leq-a f^2(t)+bc^2,&\quad t>0,\\
f(0)<-\lambda,
\end{cases}
\end{align}
then
$$
f(t)\leq\left(at+\frac{1}{f(0)+\lambda }\right)^{-1}-\lambda.
$$
Furthermore, it holds
$$
f(t) \rightarrow-\infty, \quad \text { as } \quad t \rightarrow T^* \leq -\frac{1}{a} \frac{1}{f(0)+\lambda}.
$$
\end{lemma}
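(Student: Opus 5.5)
The plan is a direct comparison argument for the Riccati inequality \eqref{ineq-ode}. Set $g(t)=f(t)+\lambda$, so that $g(0)<0$. The first step is to rewrite the right-hand side of \eqref{ineq-ode} in terms of $g$. Since $bc^2=a\lambda^2$, we have
\begin{align*}
-af^2+bc^2=-a(f-\lambda)(f+\lambda)=-a\,g\,(g-2\lambda).
\end{align*}
Hence $g'\le -a g^2+2a\lambda g$.

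The second step shows that $g$ stays negative and decreasing. As long as $g<0$, the term $2a\lambda g$ is negative, so $g'\le -ag^2<0$. A continuity argument then shows that $g(t)\le g(0)<0$ on the whole interval of existence: if $g$ first reached the value $g(0)$ again at some later time, this would contradict $g'<0$ on the preceding interval.

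The third step integrates the simpler inequality. From $g'\le -ag^2$ with $g<0$ we get
\begin{align*}
\frac{\dd}{\dd t}\Big(\frac1g\Big)=-\frac{g'}{g^2}\ge a.
\end{align*}
Integrating gives $\frac1{g(t)}\ge \frac1{g(0)}+at$. While the right-hand side is still negative, taking reciprocals of two negative numbers reverses the inequality, so
\begin{align*}
g(t)\le \Big(at+\frac1{g(0)}\Big)^{-1}.
\end{align*}
Substituting $g=f+\lambda$ gives exactly the claimed bound $f(t)\le\big(at+\frac1{f(0)+\lambda}\big)^{-1}-\lambda$.

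The last step is the blow-up statement. As $t\uparrow -\frac{1}{a(f(0)+\lambda)}$, the quantity $at+\frac1{f(0)+\lambda}$ increases to $0^-$, so the upper bound tends to $-\infty$. Therefore $f$ cannot remain finite beyond this time, and $f\to-\infty$ at some $T^*\le -\frac1a\frac1{f(0)+\lambda}$.

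The main delicate point is the sign bookkeeping when inverting the inequalities. This is why I would establish $g<0$ before dividing by $g^2$ and before taking reciprocals, and restrict the bound to the interval where $at+1/g(0)<0$. I would also interpret "as $t\to T^*$" as applying on the maximal interval where $f$ is defined; in the application $f$ is $\gamma\pa_xu$ evaluated along the flow of Lemma \ref{le4}, and that interval is the lifespan of Lemma \ref{le3}.
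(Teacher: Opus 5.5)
Your proof is correct and follows essentially the same route as the paper. The paper first shows by a continuity argument that $f'<0$, hence $f\le f(0)<-\lambda$, and then uses $-af^2+bc^2\le -a(f+\lambda)^2$; this is exactly your step of discarding $2a\lambda g\le 0$ before integrating $(1/g)'\ge a$. One small addition to your last step: since $g<0$, the two facts $1/g(t)<0$ and $1/g(t)\ge 1/g(0)+at$ already force $at+1/g(0)<0$ on the whole interval of existence. So no restriction of the interval is needed, and this is precisely what gives $T^*\le -\frac{1}{a}\frac{1}{f(0)+\lambda}$.
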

\begin{proof} We first claim that $f^{\prime}(t)<0$ for all $t>0$.
If not, by continuity of $f(t)$, there exists $t_0 \in[0, T)$ such that $f^{\prime}(t)<0$ for all $t \in\left[0, t_0\right)$ and $f^{\prime}\left(t_0\right)=0$. Since $f$ is decreasing in this interval, we have $f\left(t_0\right) \leq f(0)<-\lambda$. From $\eqref{ineq-ode}_1$, we find that $f^{\prime}\left(t_0\right) \leq -a f^2(0)-bc^2<0$, which leads to a contradiction.

Note that $f^{\prime}(t)<0$ for $t>0$ implies that $f(t)\leq f(0)<-\lambda$ for $t>0$, thus
$$
f^{\prime} \leq -a f^2+bc^2 \leq-a\left(f+\lambda\right)^2.
$$
Solving this inequality, we derive that
$$
f(t)\leq\left(at+\frac{1}{f(0)+\lambda }\right)^{-1}-\lambda.
$$
Thus we complete the proof.
\end{proof}

In this paper, we focus on the Cauchy problem of rod equation \eqref{r} only for the case $\gamma>0$. In fact, for the case $\gamma<0$, taking the transformations $v=-u$ and $\tilde{\gamma}=-\gamma$, we can consider the new Cauchy problem
\begin{align*}
\begin{cases}
v_t+\tilde{\gamma} v \pa_xv=-\partial_x G*\left(-\frac{3+\tilde{\gamma}}{2} v^2+\frac{\tilde{\gamma}}{2} (\pa_xv)^2\right), &\quad (t,x)\in \R^+\times\R,\\
v(0,x)=v_0(x), &\quad x\in \R.
\end{cases}
\end{align*}
\section{Theorem \ref{th1}: Blow-up}\label{sec3}

For $x\in\R$, let
\begin{align}\label{ODE}
\frac{\dd\phi}{\dd t}(t,x)=\gamma u(t,\phi(t,x)),\quad \phi(0,x)=x,
\end{align}
Since $u(x, t)$ is bounded and satisfies a Lipschitz condition in $x$ for any $x\in\R$ for any
$(0, T^*)$, it follows from ODE theory that \eqref{ODE} possesses a unique solution in $C^1(0, T^*)$
for any $x\in\R$. Furthermore, $x \mapsto\phi(\cdot ; x)$ is infinitely continuously differentiable throughout the interval $(0, T^*)$ for
any $x\in\R$.
Differentiating \eqref{r} with respect to space variable $x$, we find
\bal\label{u1}
\pa_t\pa_xu+\gamma u\pa^2_xu+\frac\gamma2(\pa_xu)^2=\frac{3-\gamma}{2}u^2-(1-\pa^2_x)^{-1}\f(\frac{3-\gamma}{2}u^2+\frac\gamma2(\pa_xu)^2\g)=:V.
\end{align}
Combining \eqref{ode} and \eqref{u1}, we obtain
\bal\label{du1}
\frac{\dd}{\dd t}\pa_xu(t,\phi(t,x))
&=-\frac\gamma2(\pa_xu)^2(t,\phi(t,x))+V(t,\phi(t,x)).
\end{align}
Notice that, the convolution kernel for $(1-\partial^2_{x})^{-1}$ is denoted by $G(x)=\fr12e^{-|x|}$,  we have
\bal\label{du2}
\f\|V\g\|_{L^{\infty}}=&~\f\|\frac{3-\gamma}{2}u^2-(1-\pa^2_x)^{-1}\f(\frac{3-\gamma}{2}u^2+\frac\gamma2(\pa_xu)^2\g)\g\|_{L^{\infty}}\nonumber\\
=&~\frac{|3-\gamma|}{2}\f\|u\g\|^2_{L^{\infty}}+\left\|G*\f(\frac{3-\gamma}{2}u^2+\frac\gamma2(\pa_xu)^2\g)\right\|_{L^{\infty}}\nonumber\\
\leq&~ \frac{3+\gamma}{2}\|u_0\|^2_{H^1},
\end{align}
where we have used
\bbal
\|u(t)\|_{L^\infty}\leq \frac{1}{\sqrt{2}}\|u(t)\|_{H^1}=\frac{1}{\sqrt{2}}\|u_0\|_{H^1}
\end{align*}
and the invariance of the Sobolev $H^1$-norm of the solution in the last step.

From \eqref{du1} and \eqref{du2}, we deduce that
\begin{align*}
\frac{\dd}{\dd t}\f(\gamma\pa_xu(t,\phi(t,x))\g)\leq -\frac12(\gamma\pa_xu)^2(t,\phi(t,x))+\frac{\gamma(3+\gamma)}{2}\|u_0\|_{H^1}^2.
\end{align*}
Notice that $u'_0(x_0) < -\sqrt{\frac{3+\gamma}{\gamma}}\|u_0\|_{H^1}$, by Lemma \ref{le5}, we deduce that
$$\inf_{x \in \mathbb{R}} \f\{\gamma \pa_xu(t, x)\g\}=\inf _{x \in \mathbb{R}} \f\{\gamma \pa_xu(t,\phi(t,x))\g\}\to-\infty
\quad\text{as}\;t \rightarrow T^*\leq\frac{-2}{\gamma u'_0(x_0) +\sqrt{\gamma(3+\gamma)}\|u_0\|_{H^1}},$$
which implies that the solution to \eqref{r} must blow up in finite time.
Thus, Theorem \ref{th1} is proved.

\section{Theorem \ref{th2}: Norm Inflation}\label{sec4}
\subsection{The Example Class for the Non-smooth Initial Data}\label{subsec41}
Firstly, we construct an explicit example as follows. Let $p_0\gg1$ and $0<q_0\ll1$ which will be fixed later. Set
\bal\label{ini0}
v_0(x)=
\begin{cases}
p_0q_0e^{x}, &\;\text{if}\; x\in(-\infty,-q_0),\\
-p_0e^{-q_0}x, &\;\text{if}\; x\in[-q_0,q_0],\\
-p_0q_0e^{-x}, &\;\text{if}\; x\in(q_0,+\infty).
\end{cases}
\end{align}
That is, the initial profile $v_0(x)$ is the anti-symmetric peakon-antipeakon (see {\bf Fig.1}).
\vskip0.1in
\hskip1in
\begin{minipage}{0.7\linewidth}
\hspace*{0cm}
\vspace*{0cm}
\begin{tikzpicture}[xscale=1,yscale=1]
%
%
\newcommand\X{7};
\newcommand\Y{2};
\newcommand\FX{11};
\newcommand\FY{11};
\newcommand\Z{0.6};
\newcommand\C{5};
\newcommand\A{1};
%
%
\draw[->,line width=1pt,black] (-5,0)--(5,0)
node[above left] {\fontsize{\FX}{\FY}$x$};
\draw[->,line width=1pt,black] (0,-2.5)--(0,2.5) node[below right] {\fontsize{\FX}{\FY}$v_0$};
\draw[domain=-4:4, variable=\x,
red, line width=1.5pt]
plot ({\x},{(\x < -\A) * (\C * \A * exp(\x)) +
                (\x >= -\A && \x < \A) * (-\C * exp(-\A) * \x) +
                (\x >= \A) * (-\C * \A * exp(-\x))});
\draw[line width=1pt,black,dashed]
({-(\A)},{\C * \A * exp(-\A)})
node[] { }
node[above,xshift=-1.0cm] {\fontsize{\FX}{\FY}
$-v_0(q_0)=v_0(-q_0)$}
--
({-(\A)},0)
node[below, xshift=-.2cm] {\fontsize{\FX}{\FY}$-q_0$}
node[] {$\bullet$} ;
\draw[line width=1pt,black,dashed]
({(\A)},{-\C * \A * exp(-\A)})
node[] {}
node[below,xshift=1.2cm] {\fontsize{\FX}{\FY}
$v_0(q_0)=-p_0q_0e^{-q_0}$}
--
({(\A)},0)
node[above, xshift=.2cm] {\fontsize{\FX}{\FY}$q_0$} node[] {$\bullet$};
\end{tikzpicture}
\end{minipage}
\vskip0.15in
\centerline{{\bf Fig.1}: Initial profile $v_0(x)$.}
\vskip0.1in
From {\bf Fig.1}, it is easy to see that $v_0(x)$ is an odd function and
$$\|v_0\|_{L^\infty(\R)}\leq p_0q_0,$$
and
\bal\label{ini-2}
v'_0(x)=
\begin{cases}
p_0q_0e^{x}, &\;\text{if}\; x\in(-\infty,-q_0),\\
-p_0e^{-q_0}, &\;\text{if}\; x\in(-q_0,q_0),\\
p_0q_0e^{-x}, &\;\text{if}\; x\in(q_0,+\infty),
\end{cases}
\end{align}
which is displayed in {\bf Fig.2}.
\vskip0.1in
\hskip1in
\begin{minipage}{0.7\linewidth}
\hspace*{0cm}
\vspace*{0cm}
\begin{tikzpicture}[xscale=1,yscale=1]
%
%
\newcommand\X{7};
\newcommand\Y{2};
\newcommand\FX{11};
\newcommand\FY{11};
\newcommand\Z{0.6};
\newcommand\C{2};
\newcommand\A{1};
%
%
\draw[->,line width=1pt,black] (-5,0)--(5,0)
node[above left] {\fontsize{\FX}{\FY}$x$};
\draw[->,line width=1pt,black] (0,-2.5)--(0,2.5) node[below left] {\fontsize{\FX}{\FY}$v'_0$};
\draw[domain=-4:-1, variable=\x,
red, line width=1.5pt]
plot ({\x},{\C*exp(\x+\A)-\C*exp(\x-\A)});
\draw[domain=-1:1, variable=\x,
red, line width=1.5pt]
plot ({\x},{-3*\C*exp(-\A)});
\draw[domain=1:4, variable=\x,
red, line width=1.5pt]
plot ({\x},{\C*exp(-\x+\A)-\C*exp(-\x-\A)});
\draw[line width=1pt,black,dashed]
({(\A)},{-3*\C*exp(-\A)})
node[] {}
node[below right,xshift=-.2cm] {\fontsize{\FX}{\FY}
$v'_0(q_0^-)=-p_0e^{-q_0}$}
--({(\A)},0)
node[below, xshift=.4cm] {\fontsize{\FX}{\FY}$q_0$} node[] {$\bullet$};
\draw[line width=1pt,black,dashed]
({-(\A)},{\C-\C*exp(-2*\A)})
node[] { }
node[above,xshift=-1.0cm] {\fontsize{\FX}{\FY}}
--
({-(\A)},0)
node[below, xshift=-.4cm] {\fontsize{\FX}{\FY}$-q_0$}
node[] {$\bullet$} ;
\draw[line width=1pt,black,dashed]
({-(\A)},{-3*\C*exp(-\A)})
node[] { }
node[above,xshift=-1.0cm] {\fontsize{\FX}{\FY}}
--
({-(\A)},0)
node[below, xshift=-.4cm] {\fontsize{\FX}{\FY}$-q_0$}
node[] {$\bullet$} ;
\draw[line width=1pt,black,dashed]
({(\A)},{\C-\C*exp(-2*\A)})
node[] {}
node[above right,xshift=-.2cm] {\fontsize{\FX}{\FY}
$v'_0(q_0^+)=p_0q_0e^{-q_0}$}
--
({(\A)},0)
node[below, xshift=.4cm] {\fontsize{\FX}{\FY}$q_0$} node[] {$\bullet$};
\end{tikzpicture}
\end{minipage}
\vskip0.1in
\centerline{{\bf Fig.2}: Graph of $v'_0(x)$.}

Furthermore, we can deduce that the following result holds:
\begin{lemma}\label{le001} Let $v_0$ be given by \eqref{ini0}. For any $p_0>0, q_0\in\f(0,\fr14\g)$ and $s\in\left(\frac{1}{2}, \frac{3}{2}\right)$, there exists a constant $C=C_s>0$ independent of $p_0$ and $q_0$ such that
\begin{align*}
C^{-1} p_0q_0^{3/2-s}\leq\|v_0\|_{H^s}\leq C p_0q_0^{3/2-s}.
\end{align*}
\end{lemma}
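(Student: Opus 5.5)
Since $v_0$ is odd and Lipschitz with compact-support-free but exponentially decaying tails, I will work with the derivative $w:=v_0'$ and use $\|v_0\|_{H^s}\approx \|v_0\|_{L^2}+\|w\|_{\dot H^{s-1}}$, with $\sigma:=s-1\in(-\frac12,\frac12)$. The $L^2$ part is elementary: $\|v_0\|_{L^2}\approx p_0q_0\cdot 1 + p_0 q_0^{3/2}\approx p_0q_0$, and since $q_0<\frac14$ and $s<\frac32$ we have $p_0q_0\le p_0 q_0^{3/2-s}$, so it only contributes to the upper bound. Thus the whole matter reduces to showing $\|w\|_{\dot H^{\sigma}}\approx p_0 q_0^{1/2-\sigma}$ (for $s\ge1$), and for $s<1$ to using instead $\|v_0\|_{\dot H^s}=\|w\|_{\dot H^{\sigma}}$ with $\sigma<0$, which is still meaningful because $\hat w(0)=0$ (indeed $\int w=0$ as $v_0$ decays).

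Decompose $w=-p_0e^{-q_0}\chi_{[-q_0,q_0]}+r$, where $r(x)=p_0q_0e^{-|x|}\chi_{\{|x|>q_0\}}$. The main term is a scaled indicator: with $\chi_q=\chi_{[-q,q]}$ one has $\widehat{\chi_q}(\xi)=2\sin(q\xi)/\xi$, and scaling gives $\|\chi_q\|_{\dot H^\sigma}^2=q^{1-2\sigma}\|\chi_1\|_{\dot H^\sigma}^2$, with $\|\chi_1\|_{\dot H^\sigma}^2=c\int|\xi|^{2\sigma}\sin^2\xi\,\xi^{-2}\dd\xi$ finite exactly when $-\frac12<\sigma<\frac12$. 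Hence the main term has norm exactly $c_s p_0e^{-q_0}q_0^{1/2-\sigma}=c_s p_0e^{-q_0}q_0^{3/2-s}$, comparable to $p_0q_0^{3/2-s}$.

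The remaining step, which I expect to be the main obstacle, is to show the tail $r$ is of lower order, i.e. $\|r\|_{\dot H^\sigma}\le C p_0 q_0$ (again $\ll p_0q_0^{3/2-s}$ only up to a constant when $q_0<\frac14$; to get a genuine lower bound one needs the ratio $q_0^{s-1/2}$ to be small, so I would either note the constant $C_s$ may absorb this by handling the combined function, or better, avoid subtraction). For $\sigma\in[0,\frac12)$ I would bound $\|r\|_{\dot H^\sigma}\le \|r\|_{L^2}^{1-2\sigma}\|r\|_{\dot H^{1/2-}}$-type interpolation; more cleanly, $r=p_0q_0(e^{-|x|}-e^{-|x|}\chi_{q_0})$, where $e^{-|x|}\in H^{1}$ contributes $O(p_0q_0)$ and $e^{-|x|}\chi_{q_0}$ is a smooth-times-indicator of size $O(p_0q_0\cdot q_0^{1/2-\sigma})$. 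For $\sigma<0$ I would use $\hat r$ is bounded by $\|r\|_{L^1}\lesssim p_0q_0$ near $\xi=0$ combined with $\hat w(0)=0$, handling low frequencies via $|\hat w(\xi)|\le |\xi|\,\|xw\|_{L^1}$. Thus $\|r\|_{\dot H^\sigma}\le Cp_0q_0^{\min(1,3/2-s)}$-type bound, giving the upper bound immediately. For the lower bound I would instead use the Gagliardo/difference characterization for $s\in(1,\frac32)$ (the range used later), $\|w\|_{\dot H^\sigma}^2\approx\iint|w(x)-w(y)|^2|x-y|^{-1-2\sigma}$, restricting to $x\in(-q_0,q_0)$, $y\in(q_0,2q_0)$ where $|w(x)-w(y)|\ge p_0e^{-q_0}$, yielding directly $\gtrsim p_0^2q_0^{1-2\sigma}$ without subtraction; for $s\le1$ an analogous argument uses the dual pairing of $w$ with a scaled bump supported in $(-q_0,q_0)$ and normalized in $\dot H^{-\sigma}$.
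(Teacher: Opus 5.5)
Your proof is correct, but it is organized differently from the paper's.

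The paper never splits $v_0$. It computes $\hat v_0$ in closed form and rescales $\eta=q_0\xi$, so that $\|v_0\|_{\dot H^s}^2$ becomes a constant times $p_0^2e^{-2q_0}q_0^{3-2s}F$ for a single integral $F$. It then bounds $F$ above (splitting at $\eta=1$) and below (restricting to $\eta\in[\frac{\pi}{4},\frac{\pi}{2}]$), uniformly in $q_0<\frac14$.

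You instead use $\|v_0\|_{\dot H^s}=\|w\|_{\dot H^{s-1}}$ with $w=v_0'$, and isolate the plateau $-p_0e^{-q_0}\chi_{[-q_0,q_0]}$. By scaling its norm is exactly $c_sp_0e^{-q_0}q_0^{3/2-s}$, and the tail costs only $O(p_0q_0)$, which gives the upper bound. You also correctly see that subtraction cannot give the lower bound uniformly in $q_0\in(0,\frac14)$ for $s$ near $\frac12$, because $q_0^{s-1/2}$ is then not small. Your lower bounds sidestep this:
\begin{itemize}
\item the Gagliardo integral restricted to $(-q_0,q_0)\times(q_0,2q_0)$ handles $s\in(1,\frac32)$;
\item pairing $w$ with $\psi(\cdot/q_0)$, $0\le\psi\in C_c^\infty(-1,1)$, gives $\|w\|_{\dot H^{\sigma}}\gtrsim p_0e^{-q_0}q_0^{1/2-\sigma}\int\psi/\|\psi\|_{\dot H^{-\sigma}}$ for every $\sigma\in(-\frac12,\frac12)$, so this one argument covers the whole range.
\end{itemize}

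The paper's route is short and explicit, but it is tied to the closed-form transform of this particular profile. Yours shows where $q_0^{3/2-s}$ comes from: a plateau of height $\approx p_0$ and width $\approx q_0$. Its lower bound only uses that the derivative is constant on an interval of length $\approx q_0$. So it applies verbatim to the mollified datum $u_0$ of Corollary~\ref{co1} for all $s\in(\frac12,\frac32)$. The paper's argument there, via $H^s\hookrightarrow W^{1,p_s}$, needs $s\ge1$.

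Two loose ends can simply be dropped.
\begin{itemize}
\item The fact $\hat w(0)=0$ is not needed, since $|\xi|^{2\sigma}$ is locally integrable for $\sigma>-\frac12$.
\item The interpolation aside and the ``smooth-times-indicator'' heuristic can go; note $e^{-|x|}$ is not smooth at $0$ anyway. Instead, the bound $|\widehat{e^{-|x|}\chi_{[-q_0,q_0]}}(\xi)|\le\min(2q_0,4|\xi|^{-1})$ gives $\|e^{-|x|}\chi_{[-q_0,q_0]}\|_{\dot H^\sigma}\lesssim q_0^{1/2-\sigma}$ for all $\sigma\in(-\frac12,\frac12)$ at once. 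This also makes the separate treatment of $\sigma<0$ unnecessary.
\end{itemize}
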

\begin{proof} It is not difficult to verify that
\bal\label{hs00}
\|v_0\|_{L^2}=p_0q_0e^{-q_0}\f(1+\frac23q_0\g)^{\fr12}.
\end{align}
Now we claim that, there exists a constant $C=C_s > 0$, independent of $p_0$ and $q_0$, such that
\bal\label{hs0}
C^{-1} p_0q_0^{\frac{3}{2}-s} \leq \|v_0\|_{\dot{H}^s} \leq C p_0q_0^{\frac{3}{2}-s}.
\end{align}
From \eqref{hs00} and \eqref{hs0}, we deduce that for some $C=C_s>0$, independent of $p_0$ and $q_0$, such that
\begin{align*}
C^{-1} p_0q_0^{\frac32-s}\leq\|v_0\|_{H^s}\leq C p_0q_0^{\frac32-s}.
\end{align*}
Next we aim to show \eqref{hs0}. Taking the Fourier transform of $v_0$ with respect to $x$ yields
\bbal
\hat{v}_0(\xi)&
=\f(\int_{-\infty}^{-q_0} +\int_{-q_0}^{q_0}+\int_{q_0}^{+\infty}\g)v_0(x) e^{-i\xi x} \dd x\nonumber\\
&=:I_1+I_2+I_3,\end{align*}
where
\bbal
I_1 & = p_0 q_0 \int_{-\infty}^{-q_0} e^{(1-i\xi)x} \dd x
= p_0 q_0 e^{-q_0}\frac{ e^{i\xi q_0}}{1-i\xi},
\\
I_2 &=-p_0 e^{-q_0} \int_{-q_0}^{q_0} x e^{-i\xi x}\dd x
= -2ip_0 e^{-q_0} \frac{q_0\xi\cos(q_0\xi)-\sin(\xi q_0)}{\xi^2} ,
\\I_3 &= -p_0 q_0\int_{q_0}^{+\infty} e^{-x} e^{-i\xi x} \dd x
= -p_0 q_0 e^{-q_0} \frac{e^{-i\xi q_0}}{1+i\xi},
\end{align*}
then we have
\bbal
\hat{v}_0(\xi) = 2i p_0 e^{-q_0} \left(
\frac{q_0 \sin(q_0\xi)}{1 + \xi^2}  + \frac{\sin(q_0\xi)}{\xi^2}-\frac{q_0 \cos(q_0\xi)}{(1 + \xi^2)\xi}
\right).
\end{align*}
We denote
$$
\widehat{f}(\xi) =
\frac{q_0 \sin(q_0\xi)}{1 + \xi^2}  + \frac{\sin(q_0\xi)}{\xi^2}-\frac{q_0 \cos(q_0\xi)}{(1 + \xi^2)\xi}.
$$
Our aim is to reduce that, there exists a constant $C=C_s > 0$, independent of $p_0$ and $q_0$, such that
\bal\label{hs1}
C^{-1}q_0^{3-2s} \leq \|f\|^2_{\dot{H}^s} \leq Cq_0^{3-2s}.
\end{align}
Using the definition of the $\dot{H}^s$-norm and the change of variable setup $\eta=q_0\xi$, we have
\bal\label{hs2}
\|f\|_{\dot{H}^s}^2 &=
 2\int^\infty_0|\xi|^{2s}\left|\frac{q_0 \sin(q_0\xi)}{1 + \xi^2}  + \frac{\sin(q_0\xi)}{\xi^2}-\frac{q_0 \cos(q_0\xi)}{(1 + \xi^2)\xi} \right|^2 \dd\xi\nonumber
 \\&=2q_0^{3-2s} \int_{0}^{\infty} |\eta|^{2s-2} \left| \frac{q_0 (\eta\sin \eta-q_0\cos\eta)}{q_0^2 + \eta^2} + \frac{\sin \eta}{\eta}\right|^2 \dd\eta.
\end{align}
Define
$$F:=\int_{0}^{\infty}|\eta|^{2s-2} \left| \frac{q_0 (\eta\sin \eta-q_0\cos\eta)}{q_0^2 + \eta^2} + \frac{\sin \eta}{\eta}\right|^2\dd\eta,$$
we just need to prove that, there exists a constant $C=C_s > 0$, independent of $p_0$ and $q_0$, such that
\bal\label{hsyy}
C^{-1} \leq F \leq C.
\end{align}

{\bf Lower  bound.} Obviously, one has
\bal\label{hs3}
 \left| \frac{q_0 (\eta\sin \eta-q_0\cos\eta)}{q_0^2 + \eta^2} + \frac{\sin \eta}{\eta}\right|\geq \f|\frac{\sin \eta}{\eta}\g|-\frac{q_0 |\eta\sin \eta|}{q_0^2 + \eta^2}-\frac{q^2_0|\cos\eta|}{q_0^2 + \eta^2}.
\end{align}
Using the basic fact $\frac2\pi x\leq \sin x\leq x$ for all $x\in[0,\frac{\pi}{2}]$, then we have for all $\eta\in[\frac\pi4,\frac\pi2]$
\bbal
\text{RHS of \eqref{hs3}}&= \frac{\sin \eta}{\eta}-\frac{q_0 \eta\sin \eta}{q_0^2 + \eta^2}-\frac{q^2_0\cos\eta}{q_0^2 + \eta^2}\\
&\ge \f(1-q_0\g) \frac{\sin \eta}{\eta}-\frac{q^2_0 }{\eta^2}
\\&\geq \frac2\pi\f(1-q_0-\frac{8}{\pi}q^2_0 \g)\geq \frac1\pi.
\end{align*}
Thus one has
\begin{align*}
F&\geq \frac1\pi\int_{\frac\pi4}^{\frac\pi2}\eta^{2s-2} \dd \eta=\frac{\pi^{2s-2}}{2s-1}\f(2^{1-2s}-4^{1-2s}\g).
\end{align*}
From which and \eqref{hs2}, we obtain the lower  bound.

{\bf Upper  bound.} We split the domain of integration as
\begin{align*}
F&=\left(\int_{0}^{1}+\int_{1}^{\infty}\right)\eta^{2s-2} \left| \frac{q_0 (\eta\sin \eta-q_0\cos\eta)}{q_0^2 + \eta^2} + \frac{\sin \eta}{\eta}\right|^2\dd\eta
=:F_1+F_2.
\end{align*}
Due to the simple fact $\sin x\leq \min\{x,1\}$ for $x\geq0$, we have
\begin{align*}
&F_1\leq 4\int_{0}^{1}\eta^{2s-2} \dd \eta\leq \frac{4}{2s-1},\nonumber\\
&F_2\leq 9\int_{1}^{\infty}\eta^{2s-4}\dd \eta \leq \frac{9}{3-2s},
\end{align*}
where we have used $s\in\left(\frac{1}{2}, \frac{3}{2}\right)$ and $q_0 \in(0,1)$. This proves the upper bound.

Then we complete the proof of Lemma \ref{le001}.
\end{proof}

\subsection{Construction of Smooth Initial Data}\label{subsec42}

Note that $v_0\notin C^1(\R)$, to obtain the smooth solution to the rod equation \eqref{r} with smooth initial data,  we will use the mollified smooth version of $v_0$ as the initial data $u_0$. More precisely, we set
\bal\label{de-u0}
u_0(x)=(J_{q^2_0}\ast v_0)(x)=\int_{\R}v_0(x-y)J_{q^2_0}(y)\dd y.
\end{align}
Then, $u_0(x)$ is a real-valued odd function and $u_0(x)\in C^\infty(\mathbb{R})$ since $v_0(x)\in C(\mathbb{R})$. We will use the simple and key observation that
$$u'_0(x)=v'_0(x)=-p_0e^{-q_0},\quad\text{for}\; x\in(-q_0+q_0^2,q_0-q_0^2),$$
which gives that for $p\in(1,\infty)$
\bal\label{de-u10}
\|u'_0(x)\|^p_{L^p(|x|\leq q_0-q_0^2)}=2(q_0-q_0^2)p_0^pe^{-pq_0}.
\end{align}
\begin{remark}\label{remark}
For the rod equation \eqref{r}, the above smooth initial data $u_0$ must develop singularity in finite time. In fact, by Lemma \ref{le5}, we know that
the corresponding strong solution to \eqref{r} blows up in finite time. Moreover, the maximal time of
existence $T^*$ satisfies that $T^*\leq -\frac{2}{\gamma u'_0(0)}=\frac{2e^{q_0}}{\gamma p_0}$.
\end{remark}
Furthermore, we can deduce that the following result holds:
\begin{corollary}\label{co1} For any $p_0>0, q_0\in\f(0,\fr14\g)$ and $s\in\f(\fr12,\fr32\g)$, there exists $C=C_s>0$ independent of $p_0$ and $q_0$ such that
\begin{align*}
C^{-1} p_0q_0^{3/2-s} \leq \|u_0\|_{H^s}\leq C p_0q_0^{3/2-s}.
\end{align*}
\end{corollary}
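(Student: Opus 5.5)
The plan is to transfer Lemma \ref{le001} from $v_0$ to $u_0=J_{q_0^2}\ast v_0$ by working on the Fourier side. Write $\widehat{u}_0(\xi)=\widehat{J}(q_0^2\xi)\,\widehat{v}_0(\xi)$. Since $J\geq0$ and $\int J=1$, we have $|\widehat{J}|\leq 1$. This gives $\|u_0\|_{L^2}\leq\|v_0\|_{L^2}$ and $\|u_0\|_{\dot H^s}\leq\|v_0\|_{\dot H^s}$. The upper bound $\|u_0\|_{H^s}\leq Cp_0q_0^{3/2-s}$ then follows immediately from Lemma \ref{le001}.

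For the lower bound I will reuse the proof of Lemma \ref{le001}. That proof gives the lower bound on $\|v_0\|_{\dot H^s}$ using only the frequency window $\eta=q_0\xi\in[\frac\pi4,\frac\pi2]$, that is, $|\xi|\in[\frac{\pi}{4q_0},\frac{\pi}{2q_0}]$.
\begin{itemize}
\item On this window, $|q_0^2\xi|\leq \frac{\pi}{2}q_0<\frac{\pi}{8}$ because $q_0<\frac14$.
\item $J$ is even, nonnegative and supported in $[-1,1]$, so $\widehat{J}(\zeta)=\int J(x)\cos(x\zeta)\,\dd x\geq\cos(\zeta)\geq\cos(\pi/8)>\frac12$ for $|\zeta|\leq\pi/8$.
\end{itemize}
Hence, restricting the $\dot H^s$ integral to the window,
\begin{align*}
\|u_0\|_{\dot H^s}^2\geq \frac14\int_{\frac{\pi}{4q_0}\leq|\xi|\leq\frac{\pi}{2q_0}}|\xi|^{2s}|\widehat{v}_0(\xi)|^2\,\dd\xi .
\end{align*}
Using the explicit formula $\widehat v_0=2ip_0e^{-q_0}\widehat f$ and the pointwise bound $|\cdots|\geq\frac1\pi$ established in the lower-bound part of Lemma \ref{le001}, the right-hand side is at least $c_s\,p_0^2e^{-2q_0}q_0^{3-2s}$. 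Since $e^{-2q_0}\geq e^{-1/2}$, this gives $\|u_0\|_{H^s}\geq\|u_0\|_{\dot H^s}\geq C^{-1}p_0q_0^{3/2-s}$ with $C$ depending only on $s$.

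The main point needing care is the lower bound on $\widehat J$ near the origin, uniformly on the relevant frequency window. The scale $q_0^2$ of the mollifier is much smaller than the scale $q_0$ of the profile, and this is exactly what keeps the multiplier close to $1$ where the mass of $\|v_0\|_{\dot H^s}$ is detected. Everything else follows from the estimates already obtained in Lemma \ref{le001}.
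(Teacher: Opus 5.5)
Your proof is correct. Your upper bound is the paper's: $|\widehat J|\le 1$, so $\|J_{q_0^2}\ast v_0\|_{H^s}\le\|v_0\|_{H^s}$. Your lower bound takes a genuinely different route.

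The paper argues in physical space. Mollification at scale $q_0^2$ leaves the slope $-p_0e^{-q_0}$ untouched on $|x|\le q_0-q_0^2$, so \eqref{de-u10} gives $\|u_0'\|_{L^{p_s}}\gtrsim p_0q_0^{1/p_s}=p_0q_0^{3/2-s}$. The embedding $H^s\hookrightarrow W^{1,p_s}$, with $p_s=(\frac32-s)^{-1}$, then transfers this to $\|u_0\|_{H^s}$.

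You exploit the same scale separation $q_0^2\ll q_0$ on the Fourier side instead. The multiplier $\widehat J(q_0^2\xi)$ stays above $\cos(\pi/8)$ on the window $|\xi|\in[\frac{\pi}{4q_0},\frac{\pi}{2q_0}]$. That window is exactly where the lower bound of Lemma \ref{le001} is detected.

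The paper's version is shorter, and it uses the same $W^{1,p_s}$ mechanism that later drives the inflation argument. However, the embedding $H^s\hookrightarrow W^{1,p_s}$ holds only for $s\in[1,\frac32)$. For $s\in(\frac12,1)$ one has $p_s<2$, and $H^s$ does not embed in $W^{1,p_s}$. So the paper's lower-bound argument does not cover that part of the stated range. This is harmless for Theorem \ref{th2}, which only needs $s>1$.

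Your argument depends on the explicit formula for $\widehat v_0$ and the pointwise bound from Lemma \ref{le001}. In exchange, it proves the corollary on the whole interval $s\in(\frac12,\frac32)$, with constants depending only on $s$.
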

\begin{proof}
Due to $H^s\hookrightarrow W^{1,p_s}$ with $p_s=\frac{1}{\frac32-s}$, and using \eqref{de-u10}, we have
\bbal
C\|u_0\|_{H^s}\geq \|u'_0\|_{L^{p_s}(|x|\leq q_0-q_0^2)}\geq(q_0-q_0^2)^{\frac32-s}p_0e^{-q_0}\geq cp_0q_0^{3/2-s}.
\end{align*}
From \eqref{de-u0} and Lemma \ref{le001}, we also have
\bbal
\|u_0\|_{H^s}&\leq C\|v_0\|_{H^s}\leq C p_0q_0^{3/2-s}.
\end{align*}
Thus we obtain the desired result.
\end{proof}
\subsection{Upper and Lower Bound of Lifespan}
From now on, we assume that the Cauchy problem \eqref{r} with initial data $u(0,x) = u_0(x)$ given by \eqref{de-u0}
possesses a unique solution $u\in C([0, T^*); H^\infty(\R))$ for some $T^*> 0$, where $H^\infty(\R)=\cap_{s=1}^\infty H^s(\R)$. As a matter of
fact, one may combine an a priori bound and a compactness argument to work out
the local-in-time well-posedness in $H^s(\R)$ for $s > 3/2$. From \eqref{r} and standard product laws in Sobolev spaces, we deduce that $u\in C^1([0, T^*); H^\infty(\R))$.
This will be more than enough to make the arguments in the following steps rigorous. We assume that $T^*$ is the maximal time of existence or the lifespan of solution.
Then we have
\begin{lemma}\label{life}
The lifespan $T^*$ of solution to the rod equation \eqref{r} with initial data $u_0(x)$ given by \eqref{de-u0} satisfies that
\bbal
\frac{1}{\gamma p_0}\leq T^*\leq \frac3{\gamma p_0}.
\end{align*}
\end{lemma}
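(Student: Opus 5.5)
The plan is to work along the characteristics $\phi(t,x)$ of \eqref{ode}. Set $f(t,x):=\gamma\pa_xu(t,\phi(t,x))$ and combine the Riccati structure \eqref{du1} with the $L^\infty$ bound \eqref{du2} on the nonlocal term $V$. By Lemma \ref{le2}, the forcing in \eqref{du2} is controlled by $\|u_0\|_{H^1}$. By Corollary \ref{co1} with $s=1$ we have $\|u_0\|_{H^1}\le C p_0q_0^{1/2}$, which is small compared with $p_0$ once $q_0$ is small. Multiplying \eqref{du1} by $\gamma$ gives the two-sided differential inequality
\begin{align*}
-\frac12 f^2-\frac{\lambda^2}{2}\le \frac{\dd}{\dd t}f(t,x)\le -\frac12 f^2+\frac{\lambda^2}{2},\qquad \lambda:=\sqrt{\gamma(3+\gamma)}\,\|u_0\|_{H^1}\le C_\gamma p_0q_0^{1/2}.
\end{align*}
Throughout, $q_0$ is taken small enough, depending only on $\gamma$, so that $\lambda\le \frac{1}{10}\gamma p_0$ and $e^{-q_0}$ is close to $1$.

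\textbf{Upper bound.} I will take the characteristic starting at $x=0$. By \eqref{de-u10}, $f(0,0)=-\gamma p_0e^{-q_0}<-\lambda$. Lemma \ref{le5} with $a=\frac12$, $bc^2=\lambda^2/2$ (so its threshold equals $\lambda$) then shows that $f(t,0)\to-\infty$ no later than
\begin{align*}
T^*\le \frac{-2}{f(0,0)+\lambda}=\frac{2}{\gamma p_0e^{-q_0}-\lambda}\le\frac{3}{\gamma p_0},
\end{align*}
where the last step uses $e^{-q_0}\ge 0.9$ and $\lambda\le \gamma p_0/10$.

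\textbf{Lower bound.} By Lemma \ref{le3}, blow-up at $T^*$ forces $\inf_x\gamma\pa_xu\to-\infty$. Since $\phi(t,\cdot)$ is a bijection (Lemma \ref{le4}), it suffices to bound $f(t,x)$ from below uniformly in $x$ for $t<1/(\gamma p_0)$. Mollification preserves the range of $v_0'$, and by \eqref{ini-2} we have $v_0'\ge -p_0e^{-q_0}$, so $f(0,x)\ge -\gamma p_0$ for every $x$. Compare $f$ with the solution of $g'=-\frac12(g^2+\lambda^2)$, $g(0)=-\gamma p_0$, namely
\begin{align*}
g(t)=-\lambda\tan\Big(\frac{\lambda t}{2}+\arctan\frac{\gamma p_0}{\lambda}\Big).
\end{align*}
A standard comparison argument gives $f(t,x)\ge g(t)$ as long as $g$ is finite. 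The function $g$ blows up at
\begin{align*}
t_*=\frac{2}{\lambda}\arctan\frac{\lambda}{\gamma p_0}.
\end{align*}
Since $\arctan y\ge y/2$ for $0\le y\le1$ and $\lambda/(\gamma p_0)\le 1$, this yields $t_*\ge 1/(\gamma p_0)$. Hence $\gamma\pa_xu$ stays bounded below on $[0,t]$ for every $t<1/(\gamma p_0)$, and therefore $T^*\ge 1/(\gamma p_0)$.

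The main delicate point is keeping the constants honest. I need $\lambda\ll\gamma p_0$, which is exactly where the smallness of $q_0$ enters through $\|u_0\|_{H^1}\lesssim p_0q_0^{1/2}$. I also need to check that the lower bound on $u_0'$ really is $-p_0e^{-q_0}$; this holds because $J_{q_0^2}$ is a nonnegative kernel of unit mass, so $u_0'=J_{q_0^2}\ast v_0'$ is an average of values of $v_0'$ and stays within the range of $v_0'$. The comparison argument itself is routine, since $f(\cdot,x)$ is $C^1$ for smooth solutions.
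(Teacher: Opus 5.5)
Your proof is correct. The upper bound is the paper's argument: Lemma \ref{le5} applied along the characteristic from $x=0$. Your bound $2/(\gamma p_0e^{-q_0}-\lambda)$ equals the paper's $T^1_{\max}$, since $\gamma C_\gamma\|u_0\|_{H^1}=\lambda$.

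For the lower bound you take a genuinely different and more economical route. The paper splits the initial points into three regions:
\begin{itemize}
\item $\mathbf{I}_1$, where $u_0'\equiv u_0'(0)$ and the two-sided Riccati bounds \eqref{zw} are solved explicitly;
\item the outer region $\mathbf{I}_2$, where $u_0'\ge 0$ and a bootstrap gives $|\pa_xu|\lesssim p_0q_0$ along characteristics;
\item the transition region $\mathbf{I}_3$, handled by comparison with the characteristic through $0$ (this also uses the point $q_0+\delta$ where $u_0'$ vanishes).
\end{itemize}
You replace all three cases by one uniform comparison. You use only that $J_{q_0^2}\ge0$ has unit mass, so $\min u_0'=-p_0e^{-q_0}$. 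You then compare every characteristic with the exact solution $g(t)=-\lambda\tan\big(\frac{\lambda t}{2}+\arctan\frac{\gamma p_0}{\lambda}\big)$ of the forced Riccati equation.

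This needs no sign information on $\pa_xu$. The paper's lower inequality $\frac{\dd}{\dd t}\pa_xu\ge-\frac\gamma2(\pa_xu-C_\gamma\|u_0\|_{H^1})^2$ holds only where $\pa_xu\le0$, which the paper secures on $\mathbf{I}_1$ by a continuity argument. Your route also needs no separate control of $\mathbf{I}_2$ and $\mathbf{I}_3$, whose treatment in the paper is fairly brief. As a bonus it gives the slightly sharper blow-up lower bound $t_*=\frac{2}{\lambda}\arctan\frac{\lambda}{\gamma p_0}\approx\frac{2}{\gamma p_0}$.

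What the paper's case analysis buys in exchange is the pointwise two-sided bound \eqref{zw} on $\mathbf{I}_1$ and the specific time $T^1_{\min}\le T^*$. Both are reused in Section \ref{subsec44}. Your argument proves the lemma as stated but does not by itself supply them.

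One minor citation slip: the pointwise value $u_0'(0)=-p_0e^{-q_0}$ comes from the observation just before \eqref{de-u10}, not from \eqref{de-u10} itself.
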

\begin{proof}\,Indeed, from Remark \ref{remark}, we obtain the upper bound of $T^*$.

Let $\phi(t,x)$ be the flow map which solves \eqref{ODE}.
 From \eqref{du1} and \eqref{du2}, we have for $(t,x)\in[0,T^*)\times \R$
\begin{align}\label{ODE1}
\begin{cases}
\frac{\dd}{\dd t}\pa_xu(t,\phi(t,x))\leq -\frac\gamma2(\pa_xu)^2(t,\phi(t,x))+\frac{3+\gamma}{2}\|u_0\|_{H^1}^2,\\
\frac{\dd}{\dd t}\pa_xu(t,\phi(t,x))\geq -\frac\gamma2(\pa_xu)^2(t,\phi(t,x))-\frac{3+\gamma}{2}\|u_0\|_{H^1}^2.
\end{cases}
\end{align}

{\bf Case 1: $x\in \mathbf{I}_1=[-q_0+q^2_0,q_0-q^2_0]$}.

When $x\in[-q_0+q^2_0,q_0-q^2_0]$ and $p_0$ is large enough, it holds that 
$$u'_0(x)= u'_0(0)< -{C}_{\gamma}\|u_0\|_{H^1}\quad \text{with}\quad {C}_{\gamma}=\sqrt{\frac{3+\gamma}{\gamma}}.$$
By argument of continuity, we have
\bbal
\pa_xu(t,\phi(t,x))\leq u'_0(x)< -{C}_{\gamma}\|u_0\|_{H^1}, \quad \forall (t,x)\in[0,T^*)\times \mathbf{I}_1,
\end{align*}
which implies
\begin{align}\label{ODE2}
\begin{cases}
\frac{\dd}{\dd t}\pa_xu(t,\phi(t,x))\leq -\frac\gamma2\left(\pa_xu(t,\phi(t,x))+{C}_{\gamma}\|u_0\|_{H^1}\right)^2,\\
\frac{\dd}{\dd t}\pa_xu(t,\phi(t,x))\geq -\frac\gamma2\left(\pa_xu(t,\phi(t,x))-{C}_{\gamma}\|u_0\|_{H^1}\right)^2.
\end{cases}
\end{align}
Solving the above differential inequalities \eqref{ODE2} yields for  $x\in[-q_0+q^2_0,q_0-q^2_0]$
\bal\label{zw}
\frac{1}{\frac\gamma2t+\frac{1}{u'_0(0)-{C}_{\gamma}\|u_0\|_{H^1}}}+{C}_{\gamma}\|u_0\|_{H^1}\leq \pa_xu(t,\phi(t,x))\leq \frac{1}{\frac\gamma2t+\frac{1}{u'_0(0)+{C}_{\gamma}\|u_0\|_{H^1}}}-{C}_{\gamma}\|u_0\|_{H^1}.
\end{align}
From now on, for the sake of simplicity, we denote
\bbal
T^1_{\mathrm{max}}:=\frac{2}{\gamma}\frac{-1}{u'_0(0)+{C}_{\gamma}\|u_0\|_{H^1}}\quad\text{and}\quad
T^1_{\mathrm{min}}:=\frac{2}{\gamma}\frac{-1}{u'_0(0)-{C}_{\gamma}\|u_0\|_{H^1}}.
\end{align*}
We have proved that wave breaking for the rod equation \eqref{r} occurs at some time $T^*$, i.e., for $\gamma>0$, $\inf _{x \in \mathbb{R}}  \pa_xu(t, x)\to-\infty$ as $t \rightarrow T^*$. From \eqref{zw}, the solutions does not blow up
earlier than $T^1_{\mathrm{min}}$ and latter than $T^1_{\mathrm{max}}$, i.e., we can deduce that $T^*$ satisfies
$$T^1_{\mathrm{min}}\leq T^*\leq T^1_{\mathrm{max}}.$$
Notice that $u'_0(0)=-p_0e^{-q_0}$, one has
\bbal
T^1_{\mathrm{max}}=\frac2{\gamma p_0\f(e^{-q_0}-\tilde{C}_\gamma \frac{\|u_0\|_{H^1}}{p_0}\g)}\quad\text{and}\quad
T^1_{\mathrm{min}}=\frac2{\gamma p_0\f(e^{-q_0}+\tilde{C}_\gamma \frac{\|u_0\|_{H^1}}{p_0}\g)}.
\end{align*}
By Corollary \ref{co1}, one has $\frac{\|u_0\|_{H^1}}{p_0}\approx q_0^{\fr12}$. Then for $q_0$ small enough, we have
\bal\label{TT}
\frac{1}{\gamma p_0}\leq T^1_{\mathrm{min}}\leq T^*\leq T^1_{\mathrm{max}}\leq\frac3{\gamma p_0}.
\end{align}
Now we claim that, there exists an unique $\delta\in(0,q_0^2)$ such that
$u_0'(q_0+\delta)=u_0'(-q_0-\delta)=0$. 
Indeed, we need to verify that $u_0'(q_0+q_0^2)>0$ and $u_0'(q_0)<0$. By continuity and symmetry, we prove the above claim.
It is not difficult to obtain that $u_0'(q_0+q_0^2)>0$. Next, we divide the integral into parts
\bbal
u_0'(q_0)&=\int_{|q_0-y|<q_0^2}v'_0(y)J_{q_0^2}(q_0-y)\dd y\\
&=\f(\int_{\{y:-q_0^2<y-q_0\leq 0\}}+\int_{\{y:0<y-q_0<q_0^2\}}\g)v'_0(y)J_{q_0^2}(q_0-y)\dd y\\
&=:K_1+K_2.
\end{align*}
Then trivial computations give that
\bbal
&K_1=-p_0e^{-q_0}\int_{\{y:-q_0^2<y\leq0\}}J_{q_0^2}(y)\dd y=-\fr12p_0e^{-q_0},\\
&K_2=p_0q_0e^{-q_0}\int_{\{y:0<y< q_0^2\}}e^{-y}J_{q_0^2}(y)\dd y\leq\fr12p_0q_0e^{-q_0}.
 \end{align*}
Due to the fact that $q_0$ is small enough, then we obtain that $u_0'(q_0)<0$. 

{\bf Case 2: $x\in\mathbf{I}_2=(-\infty, -q_0-\delta]\cup[q_0+\delta,+\infty)$}.

Now, we claim that for some constant $M_{\gamma}>0$
 \bal\label{cla}
 |\pa_xu(t,\phi(t,x))|\leq M_{\gamma}p_0 q_0,\quad \forall (t,x)\in [0,T^*)\times \mathbf{I}_2.
 \end{align}
In fact, due to $0\leq u'_0(x)\leq 2p_0q_0$ if $x\in\mathbf{I}_2$, from \eqref{ODE1}, we have for $(t,x)\in [0,T^*)\times\mathbf{I}_2$
\bbal
&\pa_xu(t,\phi(t,x))\leq u'_0(x)+\frac{3+\gamma}{2}\|u_0\|_{H^1}^2T^*\leq M_{\gamma}p_0 q_0.
\end{align*}
From which and \eqref{ODE1}, we have for $(t,x)\in [0,T^*)\times\mathbf{I}_2$
\bbal
\pa_xu(t,\phi(t,x))\geq u'_0(x)-\f(\frac\gamma2\f(M_{\gamma}p_0 q_0\g)^2+\frac{3+\gamma}{2}\|u_0\|_{H^1}^2\g)T^*\geq -M_{\gamma}p_0 q_0.
\end{align*}
This gives \eqref{cla}, which tells us that the solution along the flow map do not blow up when $(t,x)\in [0,T^*)\times \mathbf{I}_2$.

{\bf Case 3: $x\in\mathbf{I}_3=(-q_0-\delta,-q_0+q_0^2)\cup (q_0-q_0^2,q_0+\delta)$}.

Since $u'_0(0)<u'_0(x)<0$, we consider the following two cases.
\begin{itemize}
  \item Suppose that $\pa_xu(t,\phi(t,x))>u'_0(0)$ holds for all $(t,x)\in [0,T^*)\times \mathbf{I}_3$, then we have $T^*\geq T^1_{\mathrm{min}}$.
  \item Suppose that $\pa_xu(t,\phi(t,x))>u'_0(0)$ holds for all $t\in[0,T_0)$ but $\pa_xu(T_0,\phi(T_0,x))=u'_0(0)$ for some $T_0\in(0,T^*)$, then we have for $t\in[T_0,T^*)$
\bbal
\frac{1}{\frac\gamma2(t-T_0)+\frac{1}{u'_0(0)-{C}_{\gamma}\|u_0\|_{H^1}}}+{C}_{\gamma}\|u_0\|_{H^1}&\leq \pa_xu(t,\phi(t,x))\\
&\leq \frac{1}{\frac\gamma2(t-T_0)+\frac{1}{u'_0(0)+{C}_{\gamma}\|u_0\|_{H^1}}}-{C}_{\gamma}\|u_0\|_{H^1},
\end{align*}
which implies
$$T_0+T^1_{\mathrm{min}}=T^2_{\mathrm{min}}\leq T^*\leq T^2_{\mathrm{max}}=T^1_{\mathrm{max}}+T_0.$$
\end{itemize}
Combining the above three cases, we deduce that $T^1_{\mathrm{min}}\leq T^*\leq T^1_{\mathrm{max}}$.

This completes the proof of Lemma \ref{life}.
\end{proof}

\subsection{Norm Inflation for Smooth Initial Data}\label{subsec44}

From \eqref{zw}, we deduce that for $(t,x)\in [0,T^*)\times[-q_0+q^2_0,q_0-q^2_0]$
\bal\label{pos}
m(t)+C_\gamma \|u_0\|_{H^1}\leq \pa_xu(t,\phi(t,x))\leq M(t)-C_\gamma \|u_0\|_{H^1},
\end{align}
where, for notational convenience, we denote
\bal\label{zw1}
m(t):=\frac{1}{\frac\gamma2t+\frac{1}{u'_0(0)-C_\gamma \|u_0\|_{H^1}}}\quad\text{and}\quad M(t):=\frac{1}{\frac\gamma 2t+\frac{1}{u'_0(0)+C_\gamma \|u_0\|_{H^1}}}.
\end{align}
From \eqref{pos} and \eqref{zw1}, and using Lemma \ref{le5}, we deduce that for $(t,x)\in [0,T^*)\times[\phi(t,q^2_0-q_0),\phi(t,q_0-q^2_0)]$
\bal\label{pos1}
m(t)+C_\gamma \|u_0\|_{H^1}\leq \pa_xu(t,x)\leq M(t)-C_\gamma \|u_0\|_{H^1}.
\end{align}
From \eqref{r}, then we obtain
\bbal
&\pa_t\pa_xu+\gamma u \pa_x^2u+\frac\gamma2\f(\pa_xu\g)^2=\frac{3-\gamma}{2}u^2-(1-\pa^2_x)^{-1}\f(\frac{3-\gamma}{2}u^2+\frac\gamma2\f(\pa_xu\g)^2\g)=:G,
\end{align*}
which implies that for $p>2$ which be fixed later
\bbal
\pa_t\f(\pa_xu\g)^p+\gamma  \pa_x\f(u\f(\pa_xu\g)^p\g)+\gamma\frac {p-2}2\f(\pa_xu\g)^{p+1}=pG\f(\pa_xu\g)^{p-1}.
\end{align*}
Integrating the above equation with respect to space variable $x$ over the interval $I=[\phi(t,q^2_0-q_0),\phi(t,q_0-q^2_0)]$, we have
\bal\label{p0}
\int_{I}\pa_t\f(\pa_xu\g)^p\dd x+\gamma\int_{I}\pa_x\f(u\f(\pa_xu\g)^p\g)\dd x+\gamma\frac {p-2}2\int_{I}\f(\pa_xu\g)^{p+1}\dd x
 =p\int_{I}G\f(\pa_xu\g)^{p-1}\dd x.
\end{align}
Direct computations yield that
\bal\label{p1}
\int_{I}\pa_t\f(\pa_xu\g)^p\dd x&=\frac{\dd}{\dd t}\int_{I}\f(\pa_xu\g)^p\dd x-\gamma u(t,\phi(t,q_0-q^2_0))\f(\pa_xu\g)^p(t,\phi(t,q_0-q^2_0)) \nonumber
\\&\quad +\gamma u(t,\phi(t,q^2_0-q_0))\f(\pa_xu\g)^p(t,\phi(t,q^2_0-q_0)),
\end{align}
and
\bal\label{p2}
\int_{I}\pa_x\f(u\f(\pa_xu\g)^p\g)\dd x &= u(t,\phi(t,q_0-q^2_0))\f(\pa_xu\g)^p(t,\phi(t,q_0-q^2_0))\nonumber
\\&\quad
- u(t,\phi(t,q^2_0-q_0))\f(\pa_xu\g)^p(t,\phi(t,q^2_0-q_0)).
\end{align}
Performing $\eqref{p1}+\gamma\times \eqref{p2}$, then we obtain from \eqref{p0} that
\bbal
&\quad \frac{\dd}{\dd t}\int_{I}\f(\pa_xu\g)^p\dd x+\gamma\frac {p-2}2\int_{I}\f(\pa_xu\g)^{p+1}\dd x
=p\int_{I}G\f(\pa_xu\g)^{p-1}\dd x.
\end{align*}
Letting
\bbal
A(p,t):=\int_{I}\f(-\pa_xu\g)^p(t,x)\dd x\quad\text{with}\quad A_0=\int_{|x|\leq q_0-q^2_0}\f(-u'_0(x)\g)^p\dd x,
\end{align*}
we have
\bbal
p\f|\int_{I}G\f(-\pa_xu\g)^{p-1}\dd x\g|\leq A(p,t)+C_{p,\gamma}\|u\|^2_{H^1},
\end{align*}
where we have used that for $p> 2$
$$\|G\|_{L^p} \leq C\|u\|_{H^1} \leq C\|u_0\|_{H^1} \leq Cp_0 q^{\frac12}_0.$$
Combining the above, we have
\bbal
A'(p,t)&=-\gamma\frac {p-2}2\int_{I}\pa_xu\f(-\pa_xu\g)^{p}(t,x)\dd x-p\int_{I}G\f(-\pa_xu\g)^{p-1}\dd x\\
&\geq \left(-\gamma\frac {p-2}2M(t)-1\right)A(p,t)-C p_0^2 q_0.
\end{align*}
Solving the above differential inequality and using Lemma \ref{life} give us that for $t\in [0,T^*)$
\bal\label{A0}
A(p,t)\geq&~ \exp\f\{B(p,t)-t\g\}\left(A_0-Cp_0^2 q_0t\right)\nonumber\\
\geq &~\frac23\exp\f\{B(p,t)\g\}\left(A_0-Cp_0^2 q_0t\right),
\end{align}
where
$$B(p,t)=\int^t_0 \left(-\gamma\frac {p-2}2M(\tau)\right)\dd\tau.$$
Easy computations give that$$B(p,t)=\int^t_0 \left(-\gamma\frac {p-2}2M(\tau)\right)\dd\tau=-(p-2)\ln \f(1+\fr\gamma2M(0)t\g),$$
then
\bbal
&\exp\f\{B(p,t)\g\}=\f(1+\fr\gamma2M(0)t\g)^{-(p-2)},
\end{align*}
and
\bbal
&A_0=\int_{|x|\leq q_0-q^2_0}\f(-u'_0(x)\g)^p\dd x\approx p_0^pq_0.
\end{align*}
Noticing that
$$T^1_{\mathrm{min}}=-\frac2\gamma\frac{1}{u'_0(0)-C_\gamma \|u_0\|_{H^1}},$$
then we have
\bal\label{A1}
\exp\f\{B(p,T^1_{\mathrm{min}})\g\}=&~\f(1+\fr\gamma2M(0)T^1_{\mathrm{min}}\g)^{-(p-2)}\nonumber\\
=&~\f(\frac{-u'_0(0)+C_\gamma \|u_0\|_{H^1}}{2C_\gamma \|u_0\|_{H^1}}\g)^{p-2}\nonumber\\
\geq&~ cq_0^{1-\frac{p}{2}}.
\end{align}
From \eqref{A0} and  \eqref{A1}, we have  for $p>2$
\bal\label{A2}
A(p,T^1_{\mathrm{min}})\geq \frac12\exp\f\{B(p,T_{\mathrm{min}})\g\}A_0\geq c p_0^{p}q_0^{2-\fr{p}2}.
\end{align}
From now on, taking
\begin{align}
\begin{cases}
p_s=\frac{1}{\frac32-s}\in(2,+\infty),\\
p_0=n\gg1,\\
q_0=\frac{1}{n^{p_s}\ln n}\ll1,
\end{cases}
\end{align}
then \eqref{A2} implies that
\bbal
A(p_s,T^1_{\mathrm{min}})\geq c p_0^{p_s}q_0^{2-\fr{p_s}2}\geq cn^{\frac{p_s(p_s-2)}{2}}\f(\ln n\g)^{\frac{p_s-4}{2}}.
\end{align*}
By Corollary \ref{co1}, one has for $s \in \f(1,\fr32\g)$
\bal\label{hl1}
\|u_0\|_{H^s}\leq  Cp_0q^{\frac32-s}_0\leq C\f(\ln n\g)^{s-\fr32},
\end{align}
but for $T_0\in[0,T^1_{\rm{min}})$
\bal\label{hl2}
\|u(T_{0})\|_{H^s}\geq c\|u(T_{0})\|_{{W}^{1,p_s}}
&\geq c\Big(n^{\frac{p_s-2}{2}}\f(\ln n\g)^{\frac{p_s-4}{2p_s}}\Big)^{\frac12},
\end{align}
where we have used the fact ${H}^s(\R)\hookrightarrow {W}^{1,p_s}(\R)$.

From \eqref{hl1} and \eqref{hl2}, we deduce that, there exists initial data $u^n_0$ with
\bbal
\|u^n_0\|_{H^s}\to0,\quad\text{as}\;n\to\infty,
\end{align*}
such that if we denote by $u^n(t)\in C^1([0,T^*);H^{\infty}(\mathbb{R}))$, the solution of \eqref{r} with initial data $u^n_0$ satisfies that
\bbal
\|u^n(t_n)\|_{H^s}\to\infty\quad\text{with}\; t_n\to 0,\quad\text{as}\;n\to\infty.
\end{align*}
In conclusion, we obtain the norm inflation and hence the ill-posedness of the rod equation \eqref{r}. Thus, Theorem \ref{th2} is proved.

\section*{Declarations}
\noindent\textbf{Data Availability}\\
No data was used for the research described in the article.

\vspace*{1em}
\noindent\textbf{Conflict of interest}\\
The authors declare that they have no conflict of interest.
\vspace*{1em}

\noindent\textbf{Funding}\\
J. Li is supported by the National Natural Science Foundation of China (12161004), Innovative High end Talent Project in Ganpo Talent Program (gpyc20240069), Training Program for Academic and Technical Leaders of Major Disciplines in Ganpo Juncai Support Program (20232BCJ23009), Jiangxi Provincial Natural Science Foundation (20252BAC210004). Y. Yu is supported by the National Natural Science Foundation of China (12101011).

\end{document}